 \newtheorem{theorem}{Theorem}[section]
 \newtheorem{Def}[theorem]{Definition}
 \newtheorem{Prop}[theorem]{Proposition}
 \newtheorem{Lem}[theorem]{Lemma}
 \newtheorem{Cor}[theorem]{Corollary}
  \newtheorem*{Rem*}{Remark}
 \numberwithin{equation}{section}
\begin{document}
\title{On the classification of fractal squares}

\author{Jun Jason Luo}\address{College of Mathematics and Statistics, Chongqing University, Chongqing 401331, China}\email{jun.luo@cqu.edu.cn}

\author{Jing-Cheng Liu}\address{Key Laboratory of High Performance Computing and Stochastic Information Processing (Ministry of Education of China), College of Mathematics and Computer Science, Hunan Normal University, Changsha 410081, Hunan,  P.R. China} \email{liujingcheng11@126.com}

\thanks{The research is supported in part by the NNSF of China (No.11301175, No.11301322, No.11571104), the program for excellent talents in Hunan Normal University (No. ET14101), Specialized Research Fund for the Doctoral Program of Higher Education of China (20134402120007), Foundation for Distinguished Young Talents in Higher Education of Guangdong (2013LYM\_0028).}

\keywords{fractal square, totally disconnected, congruence, Lipschitz equivalence}

\subjclass[2010]{Primary 28A80; Secondary 53C23, 54F65, 05C63}
\date{\today}

\begin{abstract}
In \cite{LaLuRa13}, the authors completely classified the topological structure of so called {\it fractal square} $F$ defined by $F=(F+{\mathcal D})/n$, where ${\mathcal{D}}\subsetneq\{0,1,\dots,n-1\}^2, n\ge 2$. In this paper, we further provide simple criteria for the $F$ to be totally disconnected, then we discuss the Lipschitz classification of $F$ in the case $n=3$, which is an attempt to consider non-totally disconnected sets.
\end{abstract}

\maketitle

\begin{section}{Introduction}

For $n\geq 2$, let ${\mathcal{D}}=\{d_1,\dots, d_m\}\subsetneq \{0,1,\dots,n-1\}^2$ be a digit set with cardinality $\#{\mathcal{D}}=m$, and let $\{S_i\}_{i=1}^m$  be an iterated function system (IFS) on ${\mathbb R}^2$, where $S_i(x)=\frac{1}{n}(x+d_i)$ where $d_i\in {\mathcal D}$. Then there exists a unique self-similar set $F\subset {\mathbb R}^2$  \cite{Fa90} satisfying the set equation:
\begin{equation}\label{set equation of fractal square}
F= \bigcup_{i=1}^m S_i(F)=\frac{1}{n}(F+{\mathcal D})
\end{equation} which is called a {\it fractal square} \cite{LaLuRa13}. The geometric construction of a fractal square seems like that of middle third Cantor set:  First we divide a unit square into $n^2$ small equal squares of which $m$ small squares are kept and the rest discarded, the positions of the $m$ chosen squares depend on  ${\mathcal D}$; Secondly, repeat the first step on every chosen square and continue in this way, we then obtain a fractal square by taking limits. Obviously, for fixed $n$, the $F$ only relies on the digit set ${\mathcal D}$.

In \cite{LaLuRa13}, Lau, Rao and one of the authors gave a detailed study on the topological structure of $F$, they completely classified the topology of $F$ by three types: (I) $F$ is totally disconnected; (II) $F$ contains a non-trivial component  which is not a line segment; and (III) All non-trivial components of $F$ are parallel line segments.

Let  ${\mathcal{F}}_{n,m}$ denote the collection of all fractal squares satisfying (\ref{set equation of fractal square}). It is easy to see that the fractal squares in ${\mathcal{F}}_{n,m}$ have the common Hausdorff dimension ($\log m/\log n$) but  distinct topological structures. In the above three types,  the fractal squares of type (I) are called Cantor-type sets which play an important role in fractal geometry and dynamical systems, so we will give a further study on this case. Especially, we provide simple criteria for the existence of type (I) in ${\mathcal{F}}_{n,m}$.

Two sets $E$ and $F$ on ${\mathbb R}^d$ are said to be {\it Lipschitz equivalent}, and denoted by $E\simeq F$, if there is a bi-Lipschitz map  $g$ from $E$ onto $F$, i.e., $g$ is a bijection and  there is a constant $C>0$ such that
$$
C^{-1}|x-y|\leq |g(x)-g(y)|\leq C|x-y|,\quad   \forall  \ x,y\in E.
$$
It is well-known that if $E \simeq F$ then they have the same Hausdorff dimension, but the converse is never true. Lipschitz classification of sets has attracted a lot of interests in the literature. In fractal geometry, the fundamental works were due to Cooper and Pignartaro \cite{CoPi88} and  Falconer and Marsh  \cite{FaMa92} on  Cantor sets. Recently, many generalizations  on totally disconnected self-similar sets (Cantor-type sets) have been extensively studied (see \cite{DeLaLu14},\cite{LuLa13},\cite{RaRuXi06},\cite{RaRuWa10},\cite{Ro10},\cite{RuWaXi12},\cite{XiXi10},\cite{XiXi13}). But there are few  results on non-totally disconnected cases \cite{WeZhDe12}.  Motivated by that, our aim of the paper is to make an attempt in this direction.

For ${\mathcal{F}}_{n,m}$, the Lipschitz equivalence class is denoted by ${\mathcal{F}}_{n,m}/{\simeq}$.  When $n=3, m=2,3,4,5$, we have

\begin{theorem}\label{thm1.2}
$\#({\mathcal{F}}_{3,2}/{\simeq}) =1; \ \#({\mathcal{F}}_{3,3}/{\simeq}) =\#({\mathcal{F}}_{3,4}/{\simeq}) =2$; and $\#({\mathcal{F}}_{3,5}/{\simeq}) \leq 10$.
\end{theorem}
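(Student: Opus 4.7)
The plan is to carry out a case analysis based on the $D_4$-orbit of the digit set $\mathcal{D}$, invoke the topological trichotomy of \cite{LaLuRa13}, and combine it with the Lipschitz-equivalence theorems for Cantor-type sets cited in the introduction. Since every symmetry of the unit square induces an isometry of the corresponding fractal square, any two digit sets in the same $D_4$-orbit produce bi-Lipschitz equivalent $F$'s; so the first step is to list $D_4$-orbit representatives of $m$-subsets of $\{0,1,2\}^2$ for $m=2,3,4,5$, a short finite enumeration.

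For $m=2$ the similarity dimension $\log 2/\log 3<1$ rules out types (II) and (III), so $F$ is automatically totally disconnected, and any two cells in the $3\times 3$ grid satisfy the strong separation condition. The Cantor-type bi-Lipschitz theorem of Falconer--Marsh \cite{FaMa92} then collapses every such $F$ to a single Lipschitz class, yielding $\#(\mathcal{F}_{3,2}/{\simeq})=1$. For $m=3$ and $m=4$, the simple totally-disconnected criteria developed earlier in the paper, applied to the short list of representatives, show that each $F$ is either totally disconnected (type I) or contains a full horizontal or vertical line of cells which under the IFS generates a line-segment structure (type III). Type-(I) representatives within a given $m$ are mutually bi-Lipschitz equivalent by the same Cantor-type result, while type-(I) and type-(III) are separated by topological dimension (equivalently, by the presence of a non-trivial connected component), giving exactly two classes in each case. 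The main verification here is to certify that the criteria from earlier in the paper cover every non-collinear configuration, so that no type-(II) representative slips through.

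The principal effort, and the main obstacle, is the bound $\#(\mathcal{F}_{3,5}/{\simeq})\leq 10$. Here all three topological types appear among the $D_4$-orbit representatives, and the task is to identify enough bi-Lipschitz equivalences to collapse the list down to ten classes. I would again invoke the Cantor-type theorem to merge all type-(I) representatives into one class. For the remaining type-(II) and type-(III) cases, explicit bi-Lipschitz maps must be constructed, typically by aligning graph-directed IFS descriptions of two fractal squares (in the spirit of \cite{RaRuXi06,XiXi10,XiXi13}) or by matching the combinatorial structure of their non-trivial continuum components. The difficulty is that with five of nine cells selected, the continua in $F$ are rich enough that bi-Lipschitz classification cannot rest on a single invariant, so each candidate pair must be examined individually; fortunately the statement claims only an upper bound of ten, so no separating invariants between the surviving classes are required, which considerably lightens the workload.
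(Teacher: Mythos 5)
Your outline follows the same skeleton as the paper (reduce by congruence, merge the totally disconnected squares, then find explicit equivalences among the rest), but two of your tools are misapplied and the decisive step is missing. First, Falconer--Marsh requires the strong separation condition, and totally disconnected fractal squares need not satisfy it: when two chosen cells share an edge or a corner, the pieces $S_i(F)$ and $S_j(F)$ may intersect (this happens for the type-(I) squares $M_1,\dots,M_5$ in ${\mathcal F}_{3,5}$, all of which have adjacent cells). The statement you actually need is Lemma \ref{lip on totaldis} (Luo--Lau, Xi--Xiong): any two totally disconnected members of ${\mathcal F}_{n,m}$ are Lipschitz equivalent, touching allowed. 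With that substitution the $m=2$ case and the merging of all type-(I) squares are fine. (Also, your description of the non-totally-disconnected squares for $m=3,4$ as containing ``a full horizontal or vertical line of cells'' misses the diagonal configurations, e.g.\ the last two matrices in the paper's ${\mathcal F}_{3,4}$ list, whose segments are diagonal.)

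The more serious gap is that you infer Lipschitz equivalence from topological type. For $m=4$ you conclude ``exactly two classes'' because all non-totally-disconnected squares are of type (III); but membership in the same topological type does not imply Lipschitz equivalence --- that is precisely why ${\mathcal F}_{3,5}$ remains only conjecturally classified. The paper must, and does, construct explicit invertible affine maps (shears such as $A=\bigl[\begin{smallmatrix}1 & 1/2\\ 0 & 1\end{smallmatrix}\bigr]$, etc.) carrying one digit set onto another, so that by uniqueness of the attractor the corresponding $F$'s are affine images of each other and Lemma \ref{lip on liner} applies; this is what identifies the six non-totally-disconnected congruence classes in ${\mathcal F}_{3,4}$. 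The same omission is fatal for your bound on ${\mathcal F}_{3,5}$: after congruence reduction there are $21$ squares, and merging the five type-(I) ones only brings the count to $17$; to reach $\leq 10$ one must actually exhibit seven further identifications, which the paper does in Theorem \ref{th3.3} ($F_7\simeq F_8$, $F_9\simeq F_{10}\simeq F_{11}$, $F_{12}\simeq F_{13}$, $F_{14}\simeq F_{15}\simeq F_{16}$, $F_{19}\simeq F_{20}$), again via explicit affine maps between digit sets. Your proposal only names the kind of techniques one might try (graph-directed alignments, matching of components) without carrying out any such construction, so the inequality $\#({\mathcal F}_{3,5}/{\simeq})\leq 10$ is not established by your argument. (The paper's Theorems \ref{th3.7} and \ref{th3.11}, separating $F_6$, $F_7$, $F_9$ by tree-likeness and branch points, are not needed for the upper bound, so omitting such invariants is indeed legitimate, as you note.)
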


The first three classes are simple, while ${\mathcal F}_{3,5}$ is complicated, as it contains all the three types of fractal squares. The complete classification seems very difficult, but we conjecture that $\#({\mathcal F}_{3,5}/\simeq)=10$ (see remarks in Section 4).

The paper is organized as follows: In Section 2, we discuss several criteria for a fractal square to be totally disconnected.  We prove Theorem \ref{thm1.2} by using various methods (see Theorems \ref{th3.4}, \ref{th3.3}, \ref{th3.7}, and \ref{th3.11}) in Section 3, and give some remarks on other cases in Section 4. Finally, we include all figures of fractal squares in ${\mathcal{F}}_{3,5}, {\mathcal{F}}_{3,6}, {\mathcal{F}}_{3,7}$ and ${\mathcal{F}}_{3,8}$ in an appendix.
\end{section}

\bigskip

\begin{section}{Criteria for total disconnectedness}

For fractal square $F$ as in (\ref{set equation of fractal square}), we define a set on ${\mathcal D}$ by  $${\mathcal{E}}=\{(d_i,d_j):(F+d_i)\cap(F+d_j)\ne\emptyset, d_i,d_j\in {\mathcal{D}}\}.$$
We say that $d_i, d_j$ are {\it $\mathcal{E}$-connected} if there exists a finite sequence $\{d_{j_1},\dots,d_{j_k}\}\subset {\mathcal{D}} $ such that $d_i=d_{j_1},d_j=d_{j_k}$ and $(d_{j_l},d_{j_{l+1}})\in {\mathcal{E}}, 1\leq l \leq k-1$.  The following criterion for connectedness  was first proved by \cite{Ha85} and rediscovered by \cite{KiLa00}.

\begin{Lem}\label{th2.1}
A fractal square $F$ with a digit set $\mathcal{D}$ is connected if and only if  any two $d_i, d_j\in {\mathcal{D}}$  are $\mathcal{E}$-connected.
\end{Lem}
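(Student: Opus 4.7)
My plan is to prove both directions by exploiting the self-similar identity $F=\bigcup_{i=1}^m S_i(F)$ together with the scaling equivalence $(F+d_i)\cap(F+d_j)\neq\emptyset \Leftrightarrow S_i(F)\cap S_j(F)\neq\emptyset$. Under this reformulation $\mathcal{E}$ becomes the adjacency relation on the pieces $S_i(F)$ of $F$, and the question is simply whether these pieces assemble into one connected set.

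For the easy direction (necessity) I would argue by contrapositive. Suppose $\mathcal{D}$ splits as a disjoint union $\mathcal{D}_1\sqcup\mathcal{D}_2$ with both parts nonempty and no pair in $\mathcal{D}_1\times\mathcal{D}_2$ lying in $\mathcal{E}$. Setting
\[
A=\bigcup_{d_i\in\mathcal{D}_1}S_i(F),\qquad B=\bigcup_{d_j\in\mathcal{D}_2}S_j(F),
\]
the set equation gives $F=A\cup B$; $A$ and $B$ are nonempty and compact; and $A\cap B=\emptyset$ since every cross pair fails the intersection condition. This nontrivial clopen decomposition contradicts the connectedness of $F$.

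For sufficiency I would use approximation by squares. Let $Q=[0,1]^2$. Because $\mathcal{D}\subset\{0,\dots,n-1\}^2$ we have $S_i(Q)\subset Q$ for every $i$, hence $F\subset Q$ and the decreasing nest $F_k:=\bigcup_{|w|=k}S_w(Q)$ shrinks to $F$. I would show by induction that each $F_k$ is connected. For the base case $F_1=\bigcup_i S_i(Q)$ is a union of $m$ compact connected closed sub-squares; two such sub-squares intersect whenever $S_i(F)\cap S_j(F)\neq\emptyset$ because $F\subset Q$, so the $\mathcal{E}$-connectedness hypothesis chains them into one connected set. The inductive step is the same in spirit: write $F_k=\bigcup_i S_i(F_{k-1})$, invoke the induction hypothesis so that each summand is connected, and chain them using $S_i(F)\subset S_i(F_{k-1})$. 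Finally $F=\bigcap_k F_k$ is a decreasing intersection of compact connected subsets of $\mathbb{R}^2$, hence connected.

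The main obstacle is the sufficiency direction: the hypothesis concerns intersections of pieces of $F$ itself, while the natural approximations $F_k$ are built from unit squares. The bridge is the inclusion $S_i(F)\subset S_i(Q)$ at level one, and more generally $S_i(F)\subset S_i(F_{k-1})$ at level $k$, which guarantees that the adjacency pattern prescribed by $\mathcal{E}$ persists at every stage of the approximation and is therefore inherited by the limit.
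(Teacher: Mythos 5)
Your proof is correct. The paper states this lemma without proof, attributing it to Hata and to Kirat--Lau, and your argument is essentially the standard one from those sources: for necessity, a failure of $\mathcal{E}$-connectedness yields a splitting of $\mathcal{D}$ into two nonempty parts with no cross-intersections, hence a decomposition of $F$ into two disjoint nonempty compact sets; for sufficiency, the inclusion $S_i(F)\subset S_i(F_{k-1})$ transfers the adjacency pattern of $\mathcal{E}$ to the square approximations $F_k$, each $F_k$ is then a chained union of connected pieces, and $F=\bigcap_k F_k$ is connected as a decreasing intersection of compact connected sets. Both directions are complete as written.
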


Let $B=[0,1]^2$ be the unit square, $\Sigma=\{1,\dots,m\}$. Let ${\mathcal{D}}_1={\mathcal{D}}$ and ${\mathcal{D}}_{k+1}={\mathcal{D}}+n{\mathcal{D}}_{k}$, then
\begin{equation}\label{digitset}
{\mathcal{D}}_k=\{d_{\bf u} :=d_{j_k}+nd_{j_{k-1}}+ \cdots + n^{k-1}d_{j_1}: {\bf u}=j_1\cdots j_k\in{\Sigma}^k\}, \quad k\geq 1.
\end{equation}
Denote by $S_{\bf u}(B)=S_{j_1}\circ\cdots\circ S_{j_k}(B)=n^{-k}(B+d_{\bf u})$, we call such $S_{\bf u}(B)$ (or any translation of $n^{-k}$ scaling of $B$) a {\it $k$-square}. Obviously, we have
\begin{equation}\label{eq2.1}
F=\bigcap_{k=1}^{\infty}\bigcup_{{\bf u}\in {\Sigma^k}}S_{\bf u}(B).
\end{equation}

By letting $F^{(k)}=\bigcup_{{\bf u}\in {\Sigma^k}}S_{\bf u}(B)$, we call $F^{(k)}$ a {\it $k$-th approximation} of the fractal square $F$.

\begin{Cor}\label{connectedness lemma}
A fractal square $F$ is connected if and only if the $k$-th approximation  $F^{(k)}$  is connected  for any $k\geq 1$.
\end{Cor}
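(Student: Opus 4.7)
The plan is to prove the two directions separately, applying Lemma \ref{th2.1} to the iterated IFS for one direction and a standard nested-compacta argument for the other.

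For the forward implication, I would apply Lemma \ref{th2.1} not to the original IFS $\{S_i\}_{i=1}^m$ but to the $k$-th iterated IFS $\{S_{\mathbf{u}}\}_{\mathbf{u}\in\Sigma^k}$ with digit set ${\mathcal D}_k$ from (\ref{digitset}), which produces the same attractor $F$. If $F$ is connected, then any two $d_{\mathbf{u}},d_{\mathbf{v}}\in{\mathcal D}_k$ are connected via a chain $\mathbf{u}=\mathbf{u}_1,\dots,\mathbf{u}_l=\mathbf{v}$ with $S_{\mathbf{u}_i}(F)\cap S_{\mathbf{u}_{i+1}}(F)\ne\emptyset$. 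Since $S_{\mathbf{u}_i}(F)\subset S_{\mathbf{u}_i}(B)$, this forces $S_{\mathbf{u}_i}(B)\cap S_{\mathbf{u}_{i+1}}(B)\ne\emptyset$. Each $k$-square is connected, and a finite union of connected sets in which consecutive members meet is connected, so $\bigcup_{i=1}^l S_{\mathbf{u}_i}(B)$ is a connected subset of $F^{(k)}$ joining $S_{\mathbf{u}}(B)$ to $S_{\mathbf{v}}(B)$. As $\mathbf{u},\mathbf{v}$ were arbitrary, $F^{(k)}$ is connected.

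For the backward implication, I would exploit that $\{F^{(k)}\}_{k\ge 1}$ is a decreasing sequence of nonempty compact sets with $F=\bigcap_{k\ge 1}F^{(k)}$ by (\ref{eq2.1}). The standard fact that the intersection of a decreasing sequence of compact connected sets in a Hausdorff space is connected then gives the result. If a direct proof is preferred, suppose $F=A\sqcup C$ with $A,C$ nonempty disjoint and closed in $F$; by compactness of $F$ and normality of ${\mathbb R}^2$, pick disjoint open sets $U\supset A$, $V\supset C$. Since $F^{(k)}$ decreases to $F$ in Hausdorff distance, $F^{(k)}\subset U\cup V$ for all sufficiently large $k$, and then $F^{(k)}=(F^{(k)}\cap U)\sqcup(F^{(k)}\cap V)$ is a separation with both parts nonempty (they contain $A$ and $C$ respectively), contradicting the connectedness of $F^{(k)}$.

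There is essentially no hard step here: the corollary is a bookkeeping consequence of Lemma \ref{th2.1} combined with a standard topological fact. The only mild subtlety worth flagging is that the $k$-square intersection condition used for $F^{(k)}$ is \emph{weaker} than the subfractal intersection condition used for $F$ in Lemma \ref{th2.1}, which is precisely why the forward direction goes through cleanly; conversely, the backward direction needs a genuine topological argument rather than a combinatorial one, since connectedness of all $F^{(k)}$ does not immediately yield an $\mathcal{E}$-chain in ${\mathcal D}$ itself.
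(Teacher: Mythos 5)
Your proof is correct. The paper states this as a Corollary without supplying a proof, and your argument is the natural one: Hata's criterion (Lemma \ref{th2.1}) applied to the iterated IFS with digit set ${\mathcal D}_k$ for the forward direction, and the standard fact that a decreasing intersection of compact connected sets in ${\mathbb R}^2$ is connected for the converse, using $F=\bigcap_k F^{(k)}$ from (\ref{eq2.1}). As a minor remark, the forward direction can be obtained without Lemma \ref{th2.1} at all: each $k$-square $S_{\bf u}(B)$ is connected and meets the connected set $F$ (it contains $S_{\bf u}(F)$), so $F^{(k)}=F\cup\bigcup_{{\bf u}\in\Sigma^k}S_{\bf u}(B)$ is a union of connected sets sharing the common connected set $F$, hence connected.
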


\begin{Def}\label{lambda-path}
In $B$, a vertical path is a curve starting at point $(x,0)$ and ending at point $(x,1)$ for some $x\in [0,1]$; a horizontal path  is a curve   starting at point $(0,y)$ and ending at point  $(1,y)$ for some $y\in [0,1]$; a cross path  is the union of one vertical path and one horizontal path;  a $\lambda$-path  is the union $\gamma_1\cup\gamma_2\cup\gamma_3$ where $\gamma_i$ are three arcs connecting  an interior point of $B$ and three corners of $B$, respectively. (see Figure \ref{fig.fourpath})
\end{Def}

Obviously, a vertical path and a horizontal path meet each other, so a cross path is connected and reaches four points of the four sides of $B$, respectively.  A $\lambda$-path is also connected. Intuitively, the shape of the $\lambda$-path looks like the letter `$\lambda$' or its rotations. The simplest $\lambda$-path  may be the union of a diagonal and half of the other in $B$. From (\ref{eq2.1}),  it can be seen that  $B\setminus F$ contains a vertical path  if and only if there exist an integer $k\ge 1$ and  a chain of adjacent $k$-squares outside  $F^{(k)}$ which begins with $[\frac{j}{n^k},\frac{j+1}{n^k}]\times[0,\frac{1}{n^k}]$ and ends with $[\frac{j}{n^k},\frac{j+1}{n^k}]\times[1-\frac{1}{n^k}, 1]$ for some $j\in \{0,1, \dots, n^k-1\}$. Similarly for the cross path and the $\lambda$-path.  (see Figure \ref{fig.pathsquare})

\begin{figure} [h]
\centering
  \includegraphics[height=4.5cm]{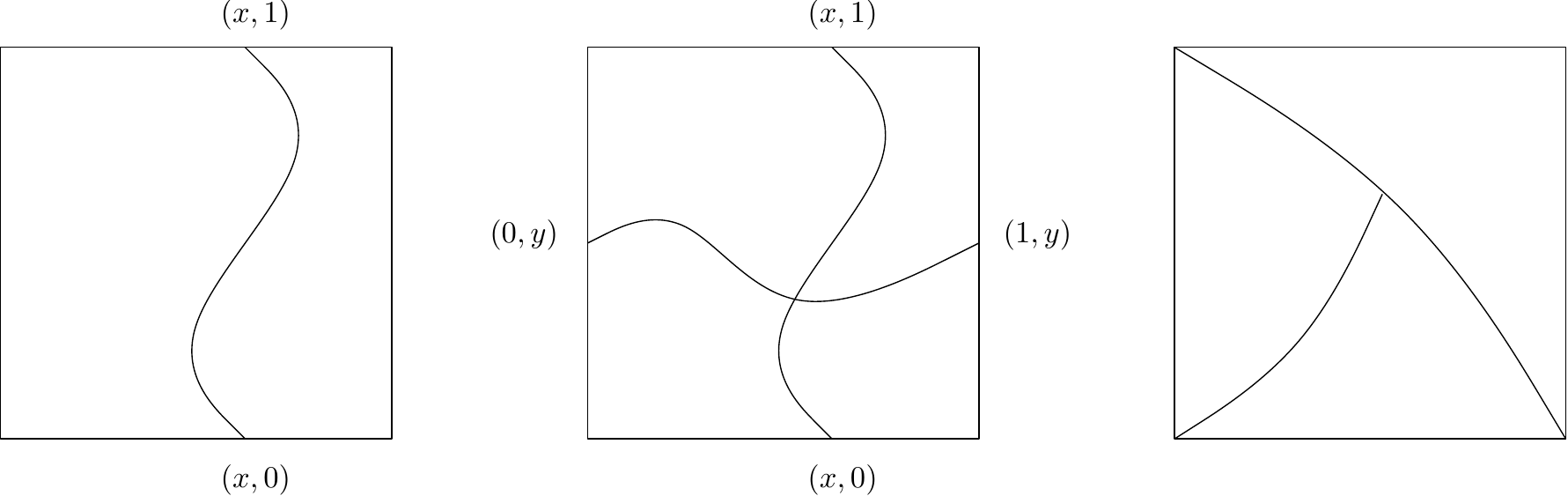}
\caption{From left to right: a vertical path, a cross path and a $\lambda$-path}\label{fig.fourpath}
\end{figure}

\begin{figure} [h]
\centering
  \includegraphics[height=4.5cm]{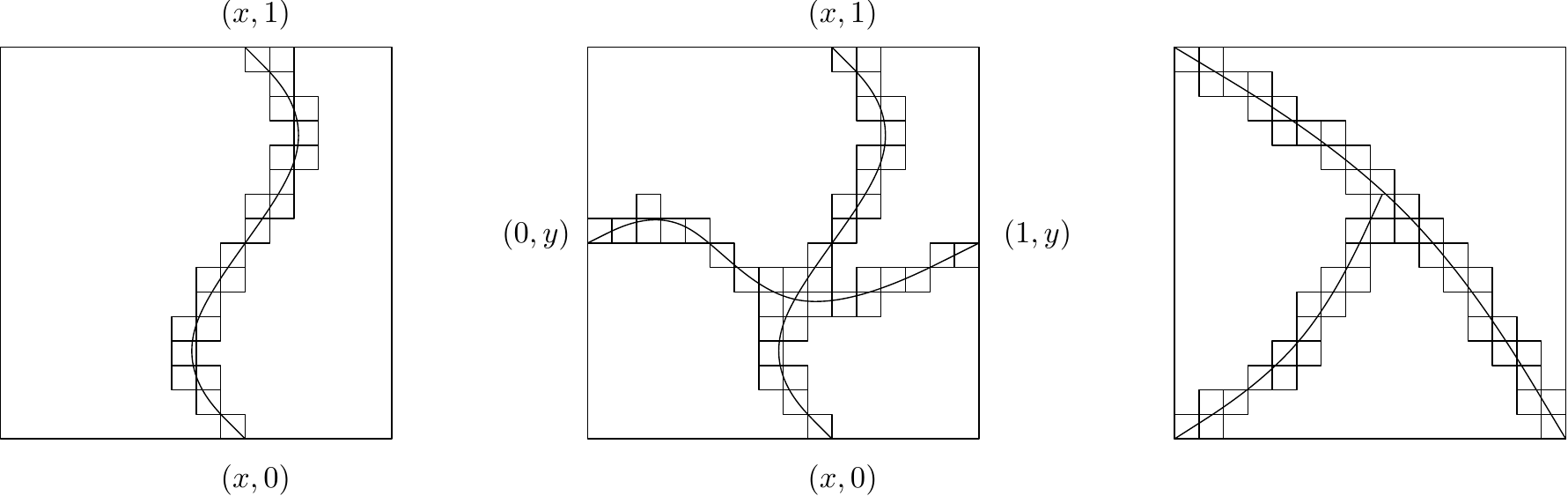}
\caption{Paths covered by squares}\label{fig.pathsquare}
\end{figure}

\bigskip

The main use of the above four paths is to verify the total disconnectedness of $F$.

\begin{Prop}[\cite{Ro10}] \label{total disco thm1}
A fractal square $F$ is totally disconnected if and only if $B\setminus F$ has a cross path.
\end{Prop}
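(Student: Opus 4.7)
The plan is to handle the two implications separately, combining the finite-level combinatorial description from the excerpt with a classical planar-topology dichotomy.

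For the forward direction ($B\setminus F$ has a cross path $\Rightarrow F$ totally disconnected), I would first use compactness: the cross path $\gamma$ is compact, disjoint from $F=\bigcap_k F^{(k)}$, so $\gamma\subset B\setminus F^{(k)}$ for some $k$. Applying the chain-reformulation noted just before the statement (to both its vertical and horizontal components, and taking a common $k$) upgrades $\gamma$ to a cross-shaped chain $\mathcal C$ of $k$-squares lying entirely in $B\setminus F^{(k)}$. Together with $\partial B$, $\mathcal C$ separates $B$ into four pairwise disjoint closed ``corner regions'' $V_1,\ldots,V_4$. Since $F\cap\operatorname{int}(\mathcal C)=\emptyset$, and any $F$-point on $\partial\mathcal C$ must sit on the boundary of a unique inside $k$-square and hence in a unique $V_i$, this gives a disjoint clopen partition $F=\bigsqcup_{i=1}^4(F\cap V_i)$. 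Self-similarity now fuels the iteration: inside each inside $k$-square $S_{\mathbf u}(B)$ with $\mathbf u\in\Sigma^k$, the scaled chain $S_{\mathbf u}(\mathcal C)$ still avoids $F^{(2k)}\supset F$ and subdivides $F\cap S_{\mathbf u}(B)$ into four clopen pieces of diameter at most $\sqrt 2/n^k$. Repeating $j$ times yields a finite clopen partition with piece-diameter $\le\sqrt 2/n^{(j-1)k}\to 0$, so every connected component of $F$ is a singleton.

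For the reverse direction ($F$ totally disconnected $\Rightarrow B\setminus F$ has a cross path), I would invoke the classical Hex/Janiszewski-type dichotomy for planar closed sets: for any closed $G\subset B$, either $G$ contains a continuum joining the left and right edges of $B$, or $B\setminus G$ contains a path from the bottom edge to the top edge (and symmetrically with the roles of horizontal and vertical swapped). Total disconnectedness rules out non-degenerate continua in $F$, so in both versions the second alternative must hold. Hence $B\setminus F$ contains a curve from bottom to top and one from left to right; slight endpoint adjustments (using that $B\setminus F$ is open and that its connected components are path-connected) can be used to match the paper's definition requiring equal $x$- respectively $y$-coordinates at the endpoints. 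Any such vertical and horizontal curves inside $B$ must intersect, so their union is a cross path in $B\setminus F$.

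The main obstacle I anticipate is the bookkeeping in the forward direction: verifying rigorously at every iterated scale that $F$ lands cleanly in a single corner region (so that the pieces are pairwise disjoint and therefore clopen) rather than being smeared along $\partial\mathcal C$. Once this is settled, the diameter shrinking via self-similarity is immediate. The reverse direction is conceptually short once the Hex/Janiszewski-type theorem is cited, but one should not skip checking the endpoint-coordinate requirement in the paper's definition of vertical/horizontal path.
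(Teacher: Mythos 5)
The paper does not actually prove this proposition---it is quoted from Roinestad's thesis \cite{Ro10}---so the right benchmark is the paper's proof of the closely analogous Theorem~\ref{total disconnected thm}. Measured against that, your forward direction has a genuine gap at the iteration step. The claim that ``the scaled chain $S_{\mathbf u}(\mathcal C)$ subdivides $F\cap S_{\mathbf u}(B)$ into four clopen pieces of diameter at most $\sqrt 2/n^{k}$'' does not do the job for two reasons. First, those pieces are clopen only in the subspace $F\cap S_{\mathbf u}(B)$, not in $F$: adjacent $k$-squares of $F^{(k)}$ share boundary points of $F$, so $F\cap S_{\mathbf u}(B)$ is not open in $F$ and neither are its pieces. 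Second, and more seriously, a connected component of $F$ need not lie inside a single $k$-square; it can thread through the corner regions of many adjacent $k$-squares, so controlling the diameter of the pieces of each $F\cap S_{\mathbf u}(B)$ separately says nothing about the diameter of a component of $F$. What makes the iteration actually work is that the scaled vertical paths in vertically adjacent $k$-squares have \emph{matching endpoints}---a vertical path ends at the same $x$-coordinate at which it starts, and the maps $S_{\mathbf u}$ are homotheties---so that, together with straight segments drawn through the discarded $k$-squares, they concatenate into $n^{k}$ global vertical paths of $B\setminus F$, one in each strip of width $n^{-k}$ (and likewise $n^{k}$ horizontal ones). A component of $F$ must avoid all of these and is therefore confined to a region of diameter $O(n^{-k})$, whence it is a singleton. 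This gluing across adjacent squares is precisely the construction in the paper's proof of Theorem~\ref{total disconnected thm}, and it is the step your write-up omits; without it the ``finite clopen partition with piece-diameter $\le\sqrt2/n^{(j-1)k}$'' is not established.

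Two smaller points. In the level-one separation it is cleaner to cut along the curve $\gamma$ itself (which is disjoint from $F$) rather than along closures of chain squares: the closures of two corner regions of an edge-connected chain can meet at a corner point, so the $V_i$ need not be pairwise disjoint. In the reverse direction, the crossing dichotomy you invoke is a legitimate tool and total disconnectedness does rule out the first alternative, but the ``slight endpoint adjustment'' conceals the real content: the dichotomy only produces a path from some $(x_1,0)$ to some $(x_2,1)$, and upgrading it to a path from $(x,0)$ to $(x,1)$ with a common $x$ requires knowing that such boundary points lie in one path component of $B\setminus F$. The clean route, and the one the paper takes for the corresponding direction of Theorem~\ref{total disconnected thm}, is to show that $B\setminus F$ is open and pathwise connected when $F$ is totally disconnected (a Janiszewski-type separation argument), after which one picks $x$ with $(x,0),(x,1)\notin F$---possible since $F$ meets each edge of $B$ in a nowhere dense set---and joins them inside $B\setminus F$.
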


The following criterion is more convenient for many cases in our consideration. Note that $F$ contains a vertical (horizontal) line segment if and only if $F^{(1)}$ does.

\begin{theorem}\label{total disconnected thm}
A fractal square $F$ is totally disconnected if and only if $F$  contains no vertical line segments and $B\setminus F$ contains a vertical path.
\end{theorem}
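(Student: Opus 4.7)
The forward direction is immediate: if $F$ is totally disconnected, then every connected subset of $F$ is a singleton, so $F$ contains no vertical line segment; moreover, by Proposition~\ref{total disco thm1}, $B\setminus F$ contains a cross path, which in particular contains a vertical path.

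For the reverse direction, assume $F$ contains no vertical line segment and $B\setminus F$ contains a vertical path $P$. The plan is to apply Proposition~\ref{total disco thm1} after producing a horizontal path $H\subset B\setminus F$: together with $P$, $H$ yields a cross path. I argue by contradiction. If no horizontal path exists in $B\setminus F$, then by a standard topological separation principle in the closed square (the complement of a closed set in $B$ admits no arc joining the left and right edges if and only if the closed set contains a continuum joining the top and bottom edges), $F$ must contain a continuum $K$ joining the top edge of $B$ to the bottom edge.

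Next I plan to exploit the self-similarity of $F$: for every $\mathbf{u}\in\Sigma^k$ with $k\geq 1$, the scaled curve $S_{\mathbf{u}}(P)$ is a vertical arc inside the $k$-square $S_{\mathbf{u}}(B)$, and because the $k$-squares have pairwise disjoint interiors, the relative interior of $S_{\mathbf{u}}(P)$ lies in $B\setminus F$. Hence $K$ must avoid the relative interior of every $S_{\mathbf{u}}(P)$. The horizontal projection $\pi_1(K)$ is a closed interval $[a,b]\subset[0,1]$. If $a<b$, then for sufficiently large $k$ one can locate a $k$-square of $F^{(k)}$ whose scaled vertical path projects into a subinterval of $(a,b)$. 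The connectedness of $K$ and the separating property of this scaled arc within its $k$-square---applied iteratively across all scales---then force $K$ to cross some $S_{\mathbf{u}}(P)$, yielding a contradiction. Consequently $a=b$, and since $K$ joins top to bottom, $K=\{a\}\times[0,1]$ is a vertical line segment contained in $F$, contradicting the hypothesis.

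The main obstacle is executing the squeeze argument rigorously: one must show that the connectedness of $K$, combined with the multi-scale separating property of the scaled arcs, indeed forces $\pi_1(K)$ to collapse to a point. This requires a careful Jordan-type analysis within each $k$-square (to rule out $K$ bypassing the scaled arc by exiting through $\partial S_{\mathbf{u}}(B)$ and re-entering from the opposite side) combined with a nested-refinement argument that propagates the restriction consistently to all finer scales.
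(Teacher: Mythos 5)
Your forward direction is fine, and your overall strategy for the converse (reduce to showing that any connected subset of $F$ with non-degenerate horizontal projection leads to a contradiction, then invoke the no-vertical-segment hypothesis) is the right one and is essentially what the paper does, though the paper applies it directly to an arbitrary component $C$ of $F$ rather than detouring through the cross-cut duality to produce a top-to-bottom continuum $K$ and then through Proposition~\ref{total disco thm1}. That detour is harmless but unnecessary.

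The genuine gap is in the step you yourself flag as the ``main obstacle'': ruling out $a<b$. A single scaled arc $S_{\mathbf u}(P)$ separates only the $k$-square $S_{\mathbf u}(B)$ that contains it; it does not separate $B$, and the continuum $K$ can simply avoid that one $k$-square altogether (passing through the same column higher up or lower down), so no Jordan-type analysis confined to one square, and no ``nested refinement'' of such local obstructions across scales, will force $K$ to cross one of the arcs --- at every scale the obstruction remains local and bypassable. The missing idea is \emph{concatenation within a single scale}: fix $k$ with $n^{-k}<b-a$, choose one full column of $n^k$ stacked $k$-squares whose horizontal extent lies in $(a,b)$, and in that column place $S_{\mathbf u}(\alpha)$ in each $k$-square belonging to $F^{(k)}$ and a straight vertical segment (at the matching abscissa $x$ of the endpoints of $\alpha$) in each $k$-square not belonging to $F^{(k)}$. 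Because the endpoints of $\alpha$ are $(x,0)$ and $(x,1)$ with $(x,0),(x,1)\notin F$, these pieces join at the shared horizontal edges into a genuine vertical path of $B$ lying in $B\setminus F$ and contained in a column of width $n^{-k}$; this path separates $B$ into a left and a right part each of which meets $K$, contradicting the connectedness of $K$. This is exactly the construction in the paper's proof, and without it your argument does not close. (A further small point common to both write-ups: one should also check that the scaled arcs do not meet $F$ along the left/right edges of their $k$-squares, where copies of $F$ from horizontally adjacent squares may intrude; your phrase ``relative interior'' does not by itself dispose of this.)
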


\begin{proof}
If $F$ is totally disconnected, then the necessity is obvious since $B\setminus F$ is open and pathwise connected. For the converse part,   let $C$ be a component of $F$, and $\text{Proj}_x C$  denote  the orthogonal projection of $C$ on the $x$-axis, then $\text{Proj}_x C$ is also pathwise connected.  We claim $|\text{Proj}_x C|=0$. Indeed, if otherwise, $|\text{Proj}_x C|>0$. Choose an integer $k$ large enough such that
\begin{align*}
& C\cap \left[\frac{i}{n^{k}},\frac{i+1}{n^{k}}\right]\times[0,1]  \ne\emptyset, \\
& C\cap \left[\frac{i+1}{n^{k}},\frac{i+2}{n^{k}}\right]\times[0,1]  \ne\emptyset, \\
& C\cap \left[\frac{i+2}{n^{k}},\frac{i+3}{n^{k}}\right]\times[0,1]  \ne\emptyset
\end{align*}
hold for some $i\in\{0, 1, \dots, n^{k}-3\}$.  Let $I_j=[\frac{i+1}{n^{k}},\frac{i+2}{n^{k}}]\times[\frac{j}{n^{k}},\frac{j+1}{n^{k}}], j=0,1,\dots, n^{k}-1$ be the $k$-squares in the rectangle  $[\frac{i+1}{n^{k}},\frac{i+2}{n^{k}}]\times[0,1]$. Suppose $\alpha$ is a vertical path of $B\setminus F$. If $I_j$ belongs to the $k$-th approximation of $F$, we denote it by $S_{\bf u}(B)$ for some ${\bf u}\in \Sigma^{k}$.  Then $S_{\bf u}(B\setminus F)$  contains a  path $S_{\bf u}(\alpha):=\alpha_j$; if not, then $I_j\subset B\setminus F$. We can take a vertical line $\beta_j$ in $I_j$ with the same horizontal coordinate as the $\alpha_j$. Hence we construct a vertical path in $B\setminus F$ by joining the paths $\alpha_j, \beta_j$, which  separate the component $C$. Thus, the $C$ must lie in one vertical line. By the assumption, $C$ can not be a vertical line segment, which implies $C$ is just a singleton. Therefore, $F$ is totally disconnected.
\end{proof}

\begin{Prop} \label{total disco thm2}
Let $F$ be a fractal square. If $B\setminus F$ contains a $\lambda$-path, then $F$ is totally disconnected. Conversely, if $F$ is totally disconnected and at most one corner of $B$ is in $F$, then there exists a $\lambda$-path in $B\setminus F$.
\end{Prop}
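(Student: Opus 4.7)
The forward direction is essentially immediate from Proposition~\ref{total disco thm1}: every $\lambda$-path already contains a cross path. The harder converse will follow from the pathwise connectedness of $B\setminus F$ (a consequence of total disconnectedness of $F$) together with a planar topology construction producing three arcs from an interior basepoint to the three non-$F$ corners that meet pairwise only at that basepoint.

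For the forward direction, I let $\gamma_1\cup\gamma_2\cup\gamma_3\subset B\setminus F$ be the given $\lambda$-path from an interior point $p$ to three corners of $B$. A short case check shows that, whichever corner of $B$ is omitted, two of the attained corners share a horizontal edge of $B$ and two share a vertical edge. After relabeling I may take the three corners to be $(0,0),(1,0),(0,1)$; then $\gamma_1\cup\gamma_2$ is a curve from $(0,0)$ to $(1,0)$, i.e., a horizontal path with $y=0$, and $\gamma_1\cup\gamma_3$ is a curve from $(0,0)$ to $(0,1)$, a vertical path with $x=0$. Their union is a cross path lying in $B\setminus F$, and Proposition~\ref{total disco thm1} then yields that $F$ is totally disconnected.

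For the converse, assume $F$ is totally disconnected and let $c_1,c_2,c_3$ be three corners of $B$ lying in $B\setminus F$. I first invoke the classical planar topology fact that a compact totally disconnected subset of $\mathbb{R}^2$ does not separate any connected planar open region: applied to $B^{\circ}$, this shows that $B^{\circ}\setminus F$ is open and connected, hence pathwise connected. Since each $c_i$ has a relative neighborhood in $B$ disjoint from the closed set $F$, the corners are reachable from the interior within $B\setminus F$, so $B\setminus F$ itself is pathwise connected. Fix any basepoint $p\in B^{\circ}\setminus F$ together with a small disk $D\subset B^{\circ}\setminus F$ about $p$, and select three pairwise internally disjoint short arcs $s_1,s_2,s_3$ inside $D$ emanating from $p$ in three distinct directions, with endpoints $q_1,q_2,q_3\in\partial D$.

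The main technical step is to extend each $s_i$ to a simple arc $\gamma_i$ from $p$ to $c_i$ within $B\setminus F$ in such a way that $\gamma_1,\gamma_2,\gamma_3$ are pairwise disjoint away from $p$. My plan is to choose any path in $B\setminus F$ from $q_i$ to $c_i$, reduce it to a simple arc, and then iteratively eliminate extraneous intersections between distinct $\gamma_i$'s by the standard ``swap-of-tails'' procedure; this rerouting works because $B\setminus F$ is simply connected (removing a totally disconnected compact obstacle from the simply connected region $B$ does not introduce essential loops in a planar setting). After finitely many swaps, $\gamma_1\cup\gamma_2\cup\gamma_3$ becomes the desired $\lambda$-path, and this uncrossing step is the only really substantive point in the proof.
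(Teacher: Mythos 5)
Your forward direction is correct but takes a genuinely different (and shorter) route than the paper. The paper proves ``$\lambda$-path $\Rightarrow$ totally disconnected'' by rerunning the separation argument of Theorem \ref{total disconnected thm}; you instead observe that, reading Definition \ref{lambda-path} literally, a $\lambda$-path already \emph{is} a cross path: concatenating the arc to the ``middle'' corner with each of the other two arcs produces a curve from $(0,y)$ to $(1,y)$ with $y\in\{0,1\}$ and a curve from $(x,0)$ to $(x,1)$ with $x\in\{0,1\}$, so Proposition \ref{total disco thm1} applies. That is legitimate with respect to the stated definitions, and the conclusion is sound; just be aware that you are invoking Proposition \ref{total disco thm1} for a degenerate cross path passing through corners of $B$ and possibly running along $\partial B$, which is exactly the configuration where the quoted criterion requires extra care (scaled copies of such a path may meet parts of $F$ contributed by neighbouring cells), and which is presumably why the paper prefers to repeat the separation argument rather than cite Proposition \ref{total disco thm1}. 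A sentence acknowledging that you use Proposition \ref{total disco thm1} exactly as stated would make this clean.

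For the converse your plan coincides with the paper's (pathwise connectedness of $B\setminus F$ plus three free corners), but the justification of what you yourself call the main technical step is wrong. First, $B\setminus F$ is \emph{not} simply connected whenever $F$ meets the interior of $B$: a small loop around an interior point of $F$ is essential in $B\setminus F$ (total disconnectedness prevents separation of the plane, not nontrivial loops), so ``simple connectedness'' cannot underwrite the rerouting. Second, the ``finitely many swaps'' claim is unjustified: two arcs in $B\setminus F$ may intersect in infinitely many points (even a Cantor set of them), so the tail-swapping procedure as described need not terminate. Fortunately the step is either unnecessary or standard: Definition \ref{lambda-path} does not require $\gamma_1,\gamma_2,\gamma_3$ to be disjoint away from the branch point; and if you want a genuine triod, join each free corner $c_i$ by a short straight segment in $B\setminus F$ to a nearby interior point, take an arc $A$ in $\mathrm{int}(B)\setminus F$ joining two of these points, then an arc from the third and cut it at its \emph{first} meeting point $q$ with $A$; the three resulting subarcs meet pairwise only at $q$, which lies in the interior of $B$. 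This is precisely the construction the paper compresses into ``we can construct a $\lambda$-path by using the pathwise connectedness of $B\setminus F$,'' so your converse needs this repair, not a new idea.
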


\begin{proof}
The proof is essentially the same as above. We mention that if $F$ is totally disconnected and at most one corner of $B$ is in $F$, then $B\setminus F$ contains at least three corners of $B$. Hence we can construct a $\lambda$-path in $B\setminus F$ by using the  pathwise connectedness of $B\setminus F$.
\end{proof}

\begin{theorem}\label{thm.totally.dis.exist.}
If $m \leq n^2-n-[\frac{n}{2}]$ then ${\mathcal{F}}_{n,m}$ contains a totally disconnected fractal square.
\end{theorem}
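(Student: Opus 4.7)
The plan is to construct, for every admissible $m$, an explicit digit set $\mathcal{D} \subset \{0,1,\dots,n-1\}^2$ of cardinality $m$ whose fractal square $F$ is totally disconnected. Because $\mathcal{D}' \subset \mathcal{D}$ implies the attractors satisfy $F' \subset F$, and total disconnectedness is inherited by subsets, it suffices to handle the extremal value $m = n^2 - n - \lfloor n/2 \rfloor$ and then delete digits for smaller $m$. So I focus on building a single $\mathcal{D}$ with $|\mathcal{D}^c| = n + \lfloor n/2 \rfloor$.

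The criterion I would apply is Theorem \ref{total disconnected thm}: the construction must ensure that (a) every row and every column of the grid $\{0,\dots,n-1\}^2$ meets $\mathcal{D}^c$, which by the remark preceding Theorem \ref{total disconnected thm} forbids any full horizontal or vertical line segment in $F$, and (b) $B \setminus F$ contains a vertical path. Condition (a) consumes $n$ cells in $\mathcal{D}^c$ (e.g., a transversal, one per row and one per column), leaving $\lfloor n/2 \rfloor$ cells to stitch the transversal into an edge-connected ``spine'' of removed cells inside $B \setminus F^{(1)}$ that at level 1 already reaches one horizontal edge of $B$. Because $n + \lfloor n/2 \rfloor < 2n-1$ for $n \geq 3$, this level-1 spine cannot reach both horizontal edges of $B$, so for (b) I would exploit self-similarity: inside each kept cell of the ``unreached'' bottom row, a scaled copy of the same spine appears at scale $1/n$, extending the chain by another $n^{-1}$ toward the opposite edge; iterating through levels and passing to the limit yields the required vertical path in $B \setminus F$, in the same spirit as the construction in the proof of Theorem \ref{total disconnected thm}.

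The hardest step will be (b): verifying that the iterated chain produces an actual continuous curve whose two endpoints are both in $B \setminus F$, rather than a sequence of curves whose limiting endpoint is a fixed point of some IFS map (hence in $F$). The natural straight concatenation of scaled spines accumulates on a single point of the boundary of $B$, and that point can be a fixed point of the map $S_{(k,0)}$ for the bottom-row digit $k$ chosen to lie in $\mathcal{D}$. The remedy, modelled on the proof of Theorem \ref{total disconnected thm}, is to insert a short horizontal jog at each iteration through the scaled complement of $F^{(1)}$ inside a kept cell, so the limiting endpoint lies on the boundary of $B$ away from any such fixed point; checking this requires a careful bookkeeping of the one-dimensional self-similar sets generated by the top/bottom rows and left/right columns of $\mathcal{D}$. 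The bound $\lfloor n/2 \rfloor$ enters exactly because one needs enough lateral freedom inside the bottom-row kept cells to perform the jogs, and this is where the construction is tightest.
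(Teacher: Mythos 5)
Your reduction to the extremal case $m=n^2-n-[\frac{n}{2}]$ (shrinking $\mathcal{D}$ shrinks the attractor, and subsets of totally disconnected sets are totally disconnected) is fine and is in fact left implicit in the paper. The genuine gap is exactly where you predicted the difficulty: the ``iterate through levels and pass to the limit'' step cannot produce a vertical path in $B\setminus F$, and the proposed jog remedy does not repair it. Two independent obstructions. First, a vertical path is a compact curve contained in the open set $B\setminus F$, hence lies at positive distance from $F$ and is covered by removed $k$-squares at a \emph{single finite level} $k$; so a vertical path exists if and only if a finite-level chain of removed $k$-squares from the bottom row to the top row exists, and a construction that genuinely needs infinitely many levels proves nothing. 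Second, if the tail of your descending curve lies, for every $k$, in a kept $k$-square $c_k$, then its endpoint $p=\lim p_k$ satisfies $\mathrm{dist}(p,F)\le|p-p_k|+\mathrm{diam}(c_k)\to 0$ (every kept $k$-square meets $F$), so $p\in F$ regardless of any horizontal jogs. The obstruction is not that $p$ is a fixed point of some $S_i$: to keep descending, your curve is forced through kept cells carrying bottom-row digits at every level, so the $x$-coordinate of the limit necessarily lies in the attractor $K$ of $\{x\mapsto(x+a)/n:(a,0)\in\mathcal{D}\}$, i.e.\ $(x,0)\in F$; the jogs cannot steer it off $K$. The only possible repair is to terminate the descent at a finite level by routing into a removed $k$-square touching the bottom edge, and showing that your edge-connected spine permits this is precisely the unproved crux.

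The paper takes a different and much cheaper route: it uses the $\lambda$-path criterion of Proposition \ref{total disco thm2} rather than Theorem \ref{total disconnected thm}, because chains of removed cells that touch only at corners cross the square with $n$ cells instead of the $2n-1$ an edge-connected chain would need. Removing the full anti-diagonal $\{(j,n-j-1)\}$ together with the half main diagonal $\{(i,i):i\le[\frac{n}{2}]\}$ (they share the central cell) stays within the budget $n+[\frac{n}{2}]$ and traces out a $\lambda$-path reaching three corners of $B$; the points where removed cells meet only at corners are handled by refining one level, since the pattern repeats inside every kept cell. Note also that deleting the corner digits $(0,0)$, $(0,n-1)$, $(n-1,0)$ simultaneously destroys the diagonal $\mathcal{E}$-connections between kept cells, which is the disconnecting mechanism your spine does not address. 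If you want to salvage your approach, you would need to either exhibit the finite-level chain of removed squares explicitly or switch to the $\lambda$-path criterion as the paper does.
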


\begin{proof}
Let ${\mathcal{D}}_1=\{(i,i): i=0,1,\dots,[\frac{n}{2}]\}\cup\{(j,n-j-1): j=0,1,\dots, n-1\}$, and a digit set ${\mathcal{D}}= \{0,1,\dots,n-1\}^2 \setminus {\mathcal{D}}_1$. Then $\#{\mathcal D} = n^2-n-[\frac{n}{2}]:=m$, and  $F=\frac{1}{n}(F+{\mathcal{D}})$ belongs to ${\mathcal F}_{n,m}$. Since the set ${\mathcal D}_1$ determines a $\lambda$-path in $B\setminus F^{(2)}$, so in $B\setminus F$, it implies that $F$ is totally disconnected by Proposition \ref{total disco thm2}.
\end{proof}

We believe the converse is also true, but we can not find a valid proof yet.
\end{section}

\bigskip

\begin{section}{Classification of fractal squares when $n=3$}

\begin{Lem}[\cite{LuLa13},\cite{XiXi10}]\label{lip on totaldis}
Let $F, F'\in {\mathcal{F}}_{n,m}$ be two fractal squares.  If $F, F'$ are totally disconnected then $F\simeq F'$.
\end{Lem}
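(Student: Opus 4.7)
The strategy is to reduce to a canonical model: for each totally disconnected $F\in\mathcal{F}_{n,m}$, I would exhibit a bi-Lipschitz equivalence $F\simeq \Omega$, where $\Omega=\Sigma^{\mathbb N}$ is the symbolic Cantor set equipped with the ultrametric $\rho(\omega,\omega'):=n^{-|\omega\wedge\omega'|}$ ($|\omega\wedge\omega'|$ being the length of the longest common prefix). Since $\Omega$ depends only on the pair $(n,m)$, transitivity of $\simeq$ then delivers $F\simeq F'$ at once.

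The natural coding map $\pi_F:\Omega\to F$, $\pi_F(i_1i_2\cdots)=\lim_k S_{i_1}\circ\cdots\circ S_{i_k}(0)$, is automatically Lipschitz with constant $\mathrm{diam}(B)$: any two sequences $\omega,\omega'$ agreeing on their first $N$ symbols satisfy $\pi_F(\omega),\pi_F(\omega')\in S_{i_1\cdots i_N}(B)$, a $k$-square of diameter $n^{-N}\mathrm{diam}(B)$. The substantive step is the matching lower bound. I would show that there exist $k\geq 1$, a partition $\Sigma^k=\bigsqcup_\alpha A_\alpha$ into finitely many classes, and a constant $c>0$ such that the \emph{clumps} $V_\alpha:=\bigcup_{\mathbf u\in A_\alpha}S_{\mathbf u}(F)$ are pairwise disjoint clopen subsets of $F$ with $\mathrm{dist}(V_\alpha,V_\beta)\geq c\,n^{-k}$ for $\alpha\ne\beta$. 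Such a clumping can be produced from total disconnectedness together with compactness: since distinct $k$-cylinders $S_{\mathbf u}(F)$ and $S_{\mathbf v}(F)$ can meet only along the common edge of the enclosing $k$-squares, the ``contact graph'' on $\Sigma^k$ has finitely many components, and these are the desired classes $A_\alpha$.

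With the clumping in place I would build the map $F\to\Omega$ recursively: match each clump $V_\alpha$ with a cylinder set in $\Omega$ (using any bijection, since both sides have the same cardinality and scale at each level) and then iterate inside every $S_{\mathbf u}(F)$. The resulting map is bi-Lipschitz because within one clump any two points sit in a common $k$-square of diameter at most $\mathrm{diam}(B)\,n^{-k}$, while distinct clumps are separated by at least $c\,n^{-k}$; this ratio is preserved at all finer scales by the self-similarity of the construction. Applying the identical argument to $F'$ gives $F'\simeq\Omega$, and the conclusion follows by transitivity.

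The principal obstacle is the clumping step: one must promote the qualitative property ``$F$ is totally disconnected'' to a quantitative uniform geometric separation of finitely many clumps. This is the technical core of the cited works \cite{LuLa13,XiXi10}, and would proceed by combining Proposition \ref{total disco thm1} (existence of a cross-path in $B\setminus F$) with compactness of $F$ to extract a positive lower bound $c$; the cross-path, when lifted to every $k$-square hit by the connected component in question, forces the contact graph on $\Sigma^k$ to break into the desired finite components once $k$ is taken large enough.
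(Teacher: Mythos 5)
The paper does not prove Lemma \ref{lip on totaldis}; it imports it from \cite{LuLa13} and \cite{XiXi10}, so your proposal must be judged against what those proofs actually have to overcome. Your overall strategy (reduce both sets to the symbolic ultrametric Cantor set and use transitivity) is indeed the right skeleton, but the two steps you treat as routine are precisely where the cited works spend all their effort, and as written both steps fail. First, the matching step is not a matter of ``any bijection, since both sides have the same cardinality and scale at each level'': the contact graph on $\Sigma^k$ decomposes the $m^k$ cylinders into some number $q$ of components with unequal sizes $p_1+\cdots+p_q=m^k$, and in general $q\neq m^k$ and the $p_\alpha$ are not powers of $m$. So a clump $V_\alpha$ (which carries $p_\alpha$ scaled copies of $F$, of diameter comparable to $n^{-k}$) cannot be paired level-by-level with a single cylinder of $\Omega$; one must prove that a ``dust'' with uneven branching numbers but common ratio $1/n$ and total count $m^k$ per generation is still Lipschitz equivalent to the homogeneous one. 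Handling exactly this mismatch is the combinatorial core of \cite{XiXi10} and \cite{LuLa13} (and, in spirit, of \cite{FaMa92} and \cite{RaRuXi06}); your sketch simply assumes it.

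Second, the recursion ``iterate inside every $S_{\mathbf u}(F)$'' cannot produce a bi-Lipschitz map, because inside a single clump distinct cylinders $S_{\mathbf u}(F)$ and $S_{\mathbf v}(F)$ may intersect (that is why they lie in the same component of the contact graph). Since $F$ is perfect, a common point is an accumulation point of both cylinders, so there are distinct points of the two cylinders at arbitrarily small distance; hence their images under any bi-Lipschitz map cannot be two uniformly separated clopen blocks of $\Omega$, and no map that respects the cylinder structure at every level can work. The correct hierarchy to recurse on is the nested family of contact-graph components (``islands'') at deeper and deeper levels, and one must also prove that the diameters of these nested islands shrink uniformly to $0$ — a quantitative strengthening of total disconnectedness (this is what the ``simple augmented tree'' condition in \cite{LuLa13} encodes), which your appeal to Proposition \ref{total disco thm1} plus compactness does not by itself deliver. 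So the proposal identifies the right canonical model but leaves both the unequal-branching matching lemma and the correct (island-based, not cylinder-based) recursive construction unproved; these are genuine gaps, not details.
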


However, if two fractal squares are not totally disconnected, there are few results about their Lipschitz equivalence. In this section, we make an attempt on some special cases, such as connected fractal squares or fractal squares containing parallel line segments.  We try to classify the Lipschitz equivalence classes of ${\mathcal{F}}_{n,m}$  for $n=3, m=2,3,4,5$. For convenience, we use an $n\times n$ matrix $M=(m_{ij})_{1\leq i,j\leq n}$ to represent a fractal square $F$  where
\begin{equation*}
m_{ij}= \begin{cases} 1 & \text{if} \  (j-1,n-i)\in {\mathcal{D}} \\
  0  &  \text{otherwise}. \end{cases}
\end{equation*}
We call $M$ the \emph{label matrix} of $F$. It is easy to see that there is a one-to-one correspondence between $F$ and $M$. The geometric meaning of $M$ is: if we divide the unit square $B$ into $n^2$ small squares and pick out $m$ small squares (depending on the digit set ${\mathcal D}$) as our first approximation of $F$, then the nonzero entries of $M$ represent the relevant positions of the $m$ chosen small squares while the zero entries represent the relevant positions of the $n^2-m$ unchosen small squares. So we prefer to use the label matrix to depict the fractal square for simplicity.

Geometrically, two sets are called {\it congruent} if one can be transformed into the other by some rigid motions. From (\ref{eq2.1}), it is seen that two fractal squares are congruent if and only if their first approximations are congruent, which can be immediately observed from the label matrices.

\begin{Lem}\label{lip on liner}
Let $g: {\mathbb R}^d\to {\mathbb R}^d$ be a linear transformation defined by  $g(x)=Ax+v$ where $A$ is a $d\times d$ invertible matrix and  $v\in {\mathbb R}^d$. Then $g$ is a bi-Lipschitz map.
\end{Lem}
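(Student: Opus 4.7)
The plan is to check the two defining properties of a bi-Lipschitz map in turn: bijectivity, and the two-sided Lipschitz inequality with a common constant.

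First I would note that $g$ is a bijection on $\mathbb{R}^d$ because $A$ is invertible, so the explicit inverse $g^{-1}(y) = A^{-1}(y - v)$ exists and is again an affine map with invertible linear part.

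Next I would handle the Lipschitz inequality. Since $v$ cancels in any difference, $g(x) - g(y) = A(x - y)$. Using the operator norm $\|A\| := \sup_{|w|=1}|Aw|$, which is finite because $A$ is a linear map on a finite-dimensional space, the upper bound follows from $|g(x)-g(y)| = |A(x-y)| \leq \|A\|\,|x-y|$. For the lower bound I would invoke the same argument on $g^{-1}$: applying $A^{-1}$ to both sides gives $|x - y| = |A^{-1}(g(x) - g(y))| \leq \|A^{-1}\|\,|g(x)-g(y)|$, which rearranges to $|g(x)-g(y)| \geq \|A^{-1}\|^{-1}|x - y|$. Taking $C = \max(\|A\|,\|A^{-1}\|)$ then yields
\[
C^{-1}|x-y| \leq |g(x)-g(y)| \leq C|x-y|
\]
for all $x, y \in \mathbb{R}^d$, which is exactly the bi-Lipschitz condition.

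There is no real obstacle here; the statement is essentially the standard fact that every invertible affine self-map of a finite-dimensional normed space is bi-Lipschitz, and the only ingredient beyond linear algebra is the finiteness of the operator norm on $\mathbb{R}^d$. The lemma is included because it will be applied repeatedly in Section 3 to justify that rigid motions, and more generally affine changes of coordinates, preserve Lipschitz equivalence classes of fractal squares.
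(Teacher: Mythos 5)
Your proof is correct and follows essentially the same route as the paper: cancel $v$ in the difference, bound $|A(x-y)|$ above by $\|A\|\,|x-y|$, and obtain the lower bound by writing $|x-y|=|A^{-1}A(x-y)|\le\|A^{-1}\|\,|A(x-y)|$. The only additions are the explicit remark on bijectivity and the choice $C=\max(\|A\|,\|A^{-1}\|)$, both of which the paper leaves implicit.
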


\begin{proof}
Since $A$ is invertible, for any $x,y\in{\mathbb R}^d$, we have $$|A(x-y)|\le \|A\||x-y|$$ and $$|x-y|=|A^{-1}A(x-y)|\le \|A^{-1}\||A(x-y)|$$ where $\|A\|$ denotes the norm of matrix $A$.
Hence $$\|A^{-1}\|^{-1}|x-y|\le |g(x)-g(y)|\le\|A\||x-y|$$ proving  that $g$ is a bi-Lipschitz map.
\end{proof}

\begin{theorem}\label{th3.4}
$\#({\mathcal{F}}_{3,2}/{\simeq}) =1; \  \#({\mathcal{F}}_{3,3}/{\simeq}) = \#({\mathcal{F}}_{3,4}/{\simeq}) =2$.
\end{theorem}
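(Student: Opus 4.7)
The argument is a case analysis on $m \in \{2, 3, 4\}$; in each case the aim is to split $\mathcal{F}_{3, m}$ into at most two topologically distinct types, each forming a single Lipschitz equivalence class. The main workhorses are Lemma \ref{lip on totaldis} (all totally disconnected members of $\mathcal{F}_{n, m}$ are mutually Lipschitz equivalent) and Lemma \ref{lip on liner} (invertible affine maps are bi-Lipschitz), while Proposition \ref{total disco thm1} and Theorem \ref{total disconnected thm} serve as the total-disconnectedness criteria.

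The case $m = 2$ is direct: with only two of the nine cells chosen, some full row and some full column of the $3 \times 3$ grid remain unoccupied, so $B \setminus F^{(1)} \subseteq B \setminus F$ already contains a cross path, and Proposition \ref{total disco thm1} gives that $F$ is totally disconnected; Lemma \ref{lip on totaldis} then yields $\#(\mathcal{F}_{3, 2}/\simeq) = 1$. For $m = 3$ and $m = 4$, I enumerate digit sets modulo the dihedral group $D_4$ of the square and split by whether the digit set contains a full line of the grid (a full row, full column, or one of the two diagonals). If such a full line is present, substitution into (\ref{set equation of fractal square}) shows that $F$ contains the associated line segment; for $m = 3$ this segment is all of $F$, so $F$ is affinely (hence bi-Lipschitz, via Lemma \ref{lip on liner}) equivalent to $[0, 1]$, while for $m = 4$ the extra cell produces a recursive tail of smaller parallel segments and $F$ becomes of type III. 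When no full line is present, a case-by-case check (using estimates on $\max_F x$ and $\max_F y$) shows for $m = 3$ that the three pieces $S_i(F)$ are pairwise disjoint, forcing $F$ to be totally disconnected; for $m = 4$ the same conclusion still holds, although some pieces may meet at single pinch points which remain isolated under the self-similar recursion and therefore generate no non-trivial connected component. Lemma \ref{lip on totaldis} collapses all these into one Lipschitz class, while the two types in each case are inequivalent because a connected set is never bi-Lipschitz to a totally disconnected one.

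The principal difficulty lies in the case $m = 4$. First, one must rule out type II fractal squares: even when the digit set produces a non-trivial $\mathcal{E}$-meeting graph, every intersection $S_i(F) \cap S_j(F)$ turns out to be a single pinch point, and such pinch points propagate to single-point components at every scale, so no non-trivial non-segment component can accumulate. Second, one must construct a single bi-Lipschitz map between any two type III fractal squares of $\mathcal{F}_{3, 4}$ with markedly different configurations (for instance, a full bottom row plus one extra cell versus a full diagonal plus one extra cell), which is accomplished by first aligning their main segments via a rigid motion and then matching the recursive tails scale by scale through self-similar affine pieces, applying Lemma \ref{lip on liner} at each step.
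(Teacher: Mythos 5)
Your overall architecture --- split each ${\mathcal F}_{3,m}$ into the totally disconnected class, collapsed by Lemma \ref{lip on totaldis}, plus at most one further class, the two being inequivalent since connectedness-type properties are preserved --- matches the paper, and the cases $m=2,3$ are essentially fine. Your cross-path argument via Proposition \ref{total disco thm1} (two cells leave an empty row and an empty column in the first approximation) is a valid substitute for the paper's one-line observation that $\log 2/\log 3<1$ already forces total disconnectedness, and your enumeration for $m=3$ supplies details for a dichotomy the paper merely asserts.

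The gap is in the case $m=4$, precisely at the step that carries the weight of the theorem: proving that all non-totally-disconnected members of ${\mathcal F}_{3,4}$ are mutually Lipschitz equivalent. ``Aligning the main segments by a rigid motion and then matching the recursive tails scale by scale through self-similar affine pieces'' is a plan, not a proof: a map assembled from affine maps on infinitely many cylinders need not be bi-Lipschitz, and you would have to verify that corresponding pieces of the two sets have comparable diameters \emph{and} comparable mutual distances at every scale, with uniform constants --- which is exactly where the difficulty sits when comparing, say, a row-based configuration with a diagonal-based one. The paper sidesteps all of this: it lists the six non-totally-disconnected configurations up to congruence and, for each pair, exhibits a single global invertible linear map $A$ (a shear or a compression) with $A{\mathcal D}={\mathcal D}'$; then $AF=\frac13(AF+A{\mathcal D})=\frac13(AF+{\mathcal D}')$, so $AF=F'$ by uniqueness of the attractor of (\ref{set equation of fractal square}), and Lemma \ref{lip on liner} finishes in one line. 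You should replace the scale-by-scale construction by this argument. A secondary soft spot: your claim that a digit set with no full line but with pieces meeting at ``pinch points'' still yields a totally disconnected $F$ is asserted rather than proved --- by Lemma \ref{th2.1}, if such touchings made the ${\mathcal E}$-graph connected then $F$ would be connected, so a genuine type (II) set must be excluded by an explicit check (or by Theorem \ref{total disconnected thm}), not by the remark that pinch points ``remain isolated under the recursion.''
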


\begin{proof}
Since  $\log 2/\log 3 <1$,  all the fractal squares in ${\mathcal{F}}_{3,2}$ are totally disconnected \cite{Fa90}. Hence $\#({\mathcal{F}}_{3,2}/{\simeq}) =1$ by Lemma \ref{lip on totaldis}.

In ${\mathcal{F}}_{3,3}$, every fractal square is either totally disconnected or connected (a line segment). Hence  $\#({\mathcal{F}}_{3,3}/{\simeq}) =2$.

By Theorem \ref{thm.totally.dis.exist.}, ${\mathcal{F}}_{3,4}$ contains totally disconnected fractal squares, and they are Lipschitz equivalent by Lemma \ref{lip on totaldis}. Moreover, it can be easily checked that, up to congruence, there are only $6$ different non-totally disconnected fractal squares, denoted by $F_i=\frac13(F_i+{\mathcal D}_i)$ where $i=1,\dots, 6$. The corresponding label  matrices are listed as follows:
{\scriptsize  \begin{eqnarray*}
\left[\begin{array}{ccc}
1 & 0 & 0 \\
0 & 0 & 0 \\
1 & 1 & 1
\end{array}\right], ~
\left[\begin{array}{ccc}
0 & 1 & 0 \\
0 & 0 & 0 \\
1 & 1 & 1
\end{array}\right], ~
\left[\begin{array}{ccc}
0 & 0 & 0 \\
1 & 0 & 0 \\
1 & 1 & 1
\end{array}\right],
\left[\begin{array}{ccc}
0 & 0 & 0 \\
0 & 1 & 0 \\
1 & 1 & 1
\end{array}\right], ~
\left[\begin{array}{ccc}
0 & 0 & 1 \\
0 & 1 & 1 \\
1 & 0 & 0
\end{array}\right], ~
\left[\begin{array}{ccc}
0 & 0 & 1 \\
0 & 1 & 0 \\
1 & 0 & 1
\end{array}\right].
\end{eqnarray*} }

We define a linear  transformation $g: {\mathbb R}^2\to{\mathbb R}^2$ by $g(x)=Ax$ where $A=\left[\begin{array}{cc}
1 & 1/2 \\
0 & 1
\end{array}\right]$. Then ${\mathcal D}_2= A{\mathcal D}_1$.  Hence  $AF_1=\frac13(AF_1+A{\mathcal D}_1)=\frac13(AF_1+{\mathcal D}_2)$, implying $F_2=g(F_1)$ by the uniqueness of attractor. So we get $F_1\simeq F_2$ as $g$ is a bi-Lipschitz map by Lemma \ref{lip on liner}.

Similarly, it is easy to verify  that ${\mathcal D}_3= A_1{\mathcal D}_1, \  {\mathcal D}_4= A_1{\mathcal D}_2, \  {\mathcal D}_6= A_2{\mathcal D}_4$, and ${\mathcal D}_5= A_3{\mathcal D}_4$,  where
$$A_1=\left[\begin{array}{cc}
1 & 0 \\
0 & 1/2
\end{array}\right],\quad A_2=\left[\begin{array}{cc}
1 & 1 \\
1 & -1
\end{array}\right],\quad  A_3=\left[\begin{array}{cc}
1 & 1 \\
1 & 0
\end{array}\right].$$
Therefore, $F_1\simeq F_2\simeq\cdots\simeq F_6$, proving $\#({\mathcal{F}}_{3,4}/{\simeq}) =2$.
\end{proof}

In ${\mathcal{F}}_{3,5}$, the total number of fractal squares is $C_9^5$.  Up to congruence, there are $21$ distinct  fractal squares among them. However, in the rest of this section, we will show that there are at most $10$ Lipschitz equivalence classes. First we use $\{F_i\}_{i=1}^{21}$ to denote $21$ fractal squares, and  each $F_i$ takes the following $M_i$ as its label matrix:

\bigskip

\noindent {\bf Type (I):} totally disconnected fractal squares:

{\scriptsize \begin{eqnarray*}
M_1= \left[\begin{array}{ccc}
0 & 1 & 0 \\
1 & 0 & 1 \\
1 & 0 & 1
\end{array}\right], M_2=\left[\begin{array}{ccc}
1 & 0 & 0 \\
0 & 1 & 1 \\
1 & 1 & 0
\end{array}\right], M_3=\left[\begin{array}{ccc}
1 & 1 & 0 \\
0 & 0 & 1 \\
1 & 0 & 1
\end{array}\right], M_4=\left[\begin{array}{ccc}
1 & 1 & 0 \\
0 & 0 & 1 \\
0 & 1 & 1
\end{array}\right],  M_5= \left[\begin{array}{ccc}
1 & 1 & 0 \\
1 & 0 & 1 \\
0 & 1 & 0
\end{array}\right].\end{eqnarray*} }

\noindent {\bf Type (II):} connected fractal squares:

{\scriptsize \begin{eqnarray*}
&& M_6 = \left[\begin{array}{ccc}
0 & 1 & 0 \\
0 & 1 & 0 \\
1 & 1 & 1
\end{array}\right], M_7=\left[\begin{array}{ccc}
1 & 0 & 0 \\
1 & 0 & 0 \\
1 & 1 & 1
\end{array}\right], M_8=\left[\begin{array}{ccc}
0 & 0 & 1 \\
0 & 1 & 0 \\
1 & 1 & 1
\end{array}\right], M_9=\left[\begin{array}{ccc}
1 & 0 & 1 \\
0 & 1 & 0 \\
1 & 0 & 1
\end{array}\right], \\
&& M_{10} = \left[\begin{array}{ccc}
0 & 1 & 0 \\
1 & 1 & 1 \\
0 & 1 & 0
\end{array}\right], M_{11}=\left[\begin{array}{ccc}
0 & 1 & 1 \\
0 & 1 & 0 \\
1 & 1 & 0
\end{array}\right].
\end{eqnarray*} }

\noindent {\bf Type (III):} fractal squares containing parallel line segments:

{\scriptsize \begin{eqnarray*}
&& M_{12}=\left[\begin{array}{ccc}
1 & 0 & 1 \\
0 & 0 & 0 \\
1 & 1 & 1
\end{array}\right], M_{13}=\left[\begin{array}{ccc}
0 & 0 & 0 \\
1 & 0 & 1 \\
1 & 1 & 1
\end{array}\right],  M_{14}= \left[\begin{array}{ccc}
0 & 0 & 0 \\
1 & 1 & 0 \\
1 & 1 & 1
\end{array}\right],  M_{15}=\left[\begin{array}{ccc}
1 & 1 & 0 \\
0 & 0 & 0 \\
1 & 1 & 1
\end{array}\right], \\
&& M_{16}=\left[\begin{array}{ccc}
0 & 0 & 1 \\
0 & 1 & 1 \\
1 & 1 & 0
\end{array}\right],  M_{17}=\left[\begin{array}{ccc}
0 & 1 & 0 \\
0 & 0 & 1 \\
1 & 1 & 1
\end{array}\right], M_{18}= \left[\begin{array}{ccc}
1 & 0 & 0 \\
0 & 0 & 1 \\
1 & 1 & 1
\end{array}\right], M_{19}=\left[\begin{array}{ccc}
0 & 0 & 1 \\
1 & 1 & 0 \\
1 & 1 & 0
\end{array}\right], \\
&& M_{20}=\left[\begin{array}{ccc}
0 & 1 & 0 \\
0 & 1 & 1 \\
1 & 1 & 0
\end{array}\right], M_{21}=\left[\begin{array}{ccc}
1 & 0 & 1 \\
0 & 1 & 0 \\
1 & 1 & 0
\end{array}\right].
\end{eqnarray*} }

By the criteria in the last section, especially Theorem \ref{total disconnected thm},  fractal squares of type (I) are indeed totally disconnected. Hence $F_i, 1\le i \le 5$ are Lipschitz equivalent by Lemma \ref{lip on totaldis}.  For types (II) and (III), we have

\begin{theorem}\label{th3.3}
$F_7\simeq F_8; \  F_9\simeq F_{10}\simeq F_{11}; \  F_{12}\simeq F_{13}; \ F_{14}\simeq F_{15}\simeq F_{16}; \ F_{19}\simeq F_{20}$.
\end{theorem}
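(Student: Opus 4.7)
The plan is to mimic the second half of the proof of Theorem~\ref{th3.4}: for each claimed equivalence $F_i\simeq F_j$, I will exhibit an invertible affine map $g(x)=Ax+v$ on $\mathbb{R}^2$ with $g(F_i)=F_j$, whereupon Lemma~\ref{lip on liner} automatically yields bi-Lipschitz equivalence. Conjugating the IFS of $F_i$ by $g$ converts each digit $d\in\mathcal{D}_i$ into $Ad+2v$, so by uniqueness of the attractor $g(F_i)=F_j$ reduces to the finite set-identity
\[
A\mathcal{D}_i+2v=\mathcal{D}_j
\]
in $\mathbb{R}^2$. Hence each of the five equivalences comes down to producing an $A\in GL_2(\mathbb{R})$ and a $v\in\mathbb{R}^2$ whose action matches the two digit sets, which can be read off directly from the label matrices $M_i$ listed above.

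Next, for each pair I would try the simplest candidates for $A$, namely shears, anisotropic diagonal scalings, and $\pi/4$ rotation-scalings, and check the set-identity by inspection. For $F_7\simeq F_8$ both digit sets share the full bottom row $\{(0,0),(1,0),(2,0)\}$, while the vertical arm $\{(0,1),(0,2)\}$ of $\mathcal{D}_7$ should correspond to the diagonal arm $\{(1,1),(2,2)\}$ of $\mathcal{D}_8$; the shear $A=\bigl[\begin{smallmatrix}1 & 1\\ 0 & 1\end{smallmatrix}\bigr]$ with $v=0$ does the job. For $F_{12}\simeq F_{13}$ the vertical contraction $A=\bigl[\begin{smallmatrix}1 & 0\\ 0 & 1/2\end{smallmatrix}\bigr]$ with $v=0$ pulls the two top digits of $\mathcal{D}_{12}$ down to the middle-row pair of $\mathcal{D}_{13}$. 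For $F_9\simeq F_{10}$ the digits of $\mathcal{D}_9$ are the four corners plus centre of the $2\times 2$ square at $(1,1)$, while those of $\mathcal{D}_{10}$ are the four edge-midpoints plus centre; the rotation-scaling $A=\tfrac{1}{2}\bigl[\begin{smallmatrix}1 & -1\\ 1 & 1\end{smallmatrix}\bigr]$ together with a translation fixing $(1,1)$ (so $v\neq 0$ is forced) interchanges the two configurations. The remaining equivalences $F_{10}\simeq F_{11}$, $F_{14}\simeq F_{15}\simeq F_{16}$ and $F_{19}\simeq F_{20}$ are treated analogously: shears and reflections suffice for the type~(III) pairs, and a further $\pi/4$ rotation-scaling handles $F_{10}\simeq F_{11}$.

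The main obstacle is the absence of a uniform construction: the matrix $A$ is a combinatorial invariant of each pair of label matrices, and in some cases (notably $F_9\simeq F_{10}$) it must be a genuinely rotational map rather than a shear or diagonal scaling, and one cannot take $v=0$ because $(0,0)\in\mathcal{D}_9$ has no counterpart in $\mathcal{D}_{10}$. So the work consists of five separate geometric guesses; once $A$ and $v$ are located, the verification of $A\mathcal{D}_i+2v=\mathcal{D}_j$ is a one-line check on a set of at most five points, and Lemma~\ref{lip on liner} finishes the job.
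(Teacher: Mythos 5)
Your framework is exactly the paper's: reduce each claimed equivalence to a digit-set identity $A\mathcal{D}_i+2v=\mathcal{D}_j$, invoke uniqueness of the attractor to get $g(F_i)=F_j$ for $g(x)=Ax+v$, and finish with Lemma~\ref{lip on liner}. The reduction is derived correctly, and three of your explicit choices check out: the shear $\left[\begin{smallmatrix}1&1\\0&1\end{smallmatrix}\right]$ for $F_7\simeq F_8$, the vertical half-scaling for $F_{12}\simeq F_{13}$, and the $\pi/4$ rotation-scaling about the configuration centre $(1,1)$ for the pair $F_9, F_{10}$ (in digit space this is $d\mapsto \frac12\left[\begin{smallmatrix}1&-1\\1&1\end{smallmatrix}\right](d-(1,1))+(1,1)$, i.e.\ $v=(1/2,0)$; note the induced map on $\mathbb{R}^2$ fixes $(1/2,1/2)$, not $(1,1)$, and the paper uses the inverse direction with $A_5=\left[\begin{smallmatrix}1&1\\-1&1\end{smallmatrix}\right]$).

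The genuine problem is the last paragraph. The claim that ``a further $\pi/4$ rotation-scaling handles $F_{10}\simeq F_{11}$'' cannot be made to work: a rotation-scaling (even allowing a reflection and a translation) is a similarity, and $\mathcal{D}_{10}=\{(1,0),(0,1),(1,1),(2,1),(1,2)\}$ is not similar to $\mathcal{D}_{11}=\{(0,0),(1,0),(1,1),(1,2),(2,2)\}$ --- in $\mathcal{D}_{10}$ the centre $(1,1)$ is equidistant from the other four digits, while no point of $\mathcal{D}_{11}$ has this property --- so the identity $A\mathcal{D}_{10}+2v=\mathcal{D}_{11}$ fails for every conformal $A$. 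A non-conformal linear part is needed, e.g.\ the shear $\left[\begin{smallmatrix}1&0\\1&1\end{smallmatrix}\right]$ about $(1,1)$ sends $\mathcal{D}_{10}$ to $\mathcal{D}_{11}$, or, as the paper does, $\mathcal{D}_{11}=A_2\mathcal{D}_9$ with $A_2=\left[\begin{smallmatrix}1/2&1/2\\0&1\end{smallmatrix}\right]$. Likewise the chains $F_{14}\simeq F_{15}\simeq F_{16}$ and $F_{19}\simeq F_{20}$ are only asserted (``treated analogously''); they do succumb to the same method, but the required maps are not quite ``shears and reflections'': one needs $\left[\begin{smallmatrix}1&0\\0&1/2\end{smallmatrix}\right]$ for $F_{15}\to F_{14}$, the swap-shear $\left[\begin{smallmatrix}1&1\\1&0\end{smallmatrix}\right]$ for $F_{14}\to F_{16}$, and $\left[\begin{smallmatrix}1&-1\\1&0\end{smallmatrix}\right]$ together with a nonzero translation for $F_{19}\to F_{20}$. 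So the method is sound and matches the paper, but as written one named candidate is wrong and several verifications are missing; each is a finite five-point computation once the correct matrices are chosen.
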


\begin{proof}
Let $F_i=\frac13(F_i+{\mathcal D}_i), i=1,\dots, 21$, and let
\begin{eqnarray*}
& & A_1=\left[\begin{array}{cc}
1 & 1 \\
0 & 1
\end{array}\right], \quad  A_2=\left[\begin{array}{cc}
1/2 & 1/2 \\
0 & 1
\end{array}\right], \quad  A_3=\left[\begin{array}{cc}
1 & 0 \\
0 & 1/2
\end{array}\right], \\
& & A_4=\left[\begin{array}{cc}
1 & 1 \\
1 & 0
\end{array}\right],  \quad  A_5=\left[\begin{array}{cc}
1 & 1 \\
-1 & 1
\end{array}\right], \quad  A_6=\left[\begin{array}{cc}
1 & -1 \\
1 & 0
\end{array}\right].
\end{eqnarray*}
Then  ${\mathcal D}_8= A_1{\mathcal D}_7,  {\mathcal D}_{11}= A_2{\mathcal D}_9, {\mathcal D}_{13}=A_3{\mathcal D}_{12},  A_3{\mathcal D}_{15}= {\mathcal D}_{14}= A_4^{-1}{\mathcal D}_{16}$. By defining linear  transformations $g_i(x)=A_ix$ for $i=1,2,3,4$, we can obtain $g_1(F_7)=F_8, g_2(F_9)=F_{11}, g_3(F_{15})=F_{14}$ and $g_4(F_{14})=F_{16}$, where  $g_i$ are bi-Lipschitz maps.

Moreover, let $g_5(x)=A_5x+v, \  g_6(x)=A_6x+\frac{v'}{2}$  where $v=\frac12\left[\begin{array}{c}
-1\\ 1\end{array}\right], \ v'=\frac12\left[\begin{array}{c}
1\\ 0\end{array}\right]$.  Then ${\mathcal D}_9=A_5{\mathcal D}_{10}+2v$ and ${\mathcal D}_{20}=A_6{\mathcal D}_{19}+v'$. Hence $$g_5(F_{10})=g_5(\frac13(F_{10}+{\mathcal D}_{10}))=\frac 13 (A_5F_{10}+A_5{\mathcal D}_{10}+3v)=\frac13(g_5(F_{10})+{\mathcal D}_9)$$ and $$g_6(F_{19})=g_6(\frac13(F_{19}+{\mathcal D}_{19}))=\frac 13 (A_6F_{19}+A_6{\mathcal D}_{19}+\frac32v')=\frac13(g_6(F_{19})+{\mathcal D}_{20}).$$
That implies  $g_5(F_{10})=F_9$ and $g_6(F_{19})=F_{20}$, finishing the proof.
\end{proof}

\bigskip

\begin{figure}[h]
 \centering
 \subfigure[]{
 \includegraphics[width=4cm]{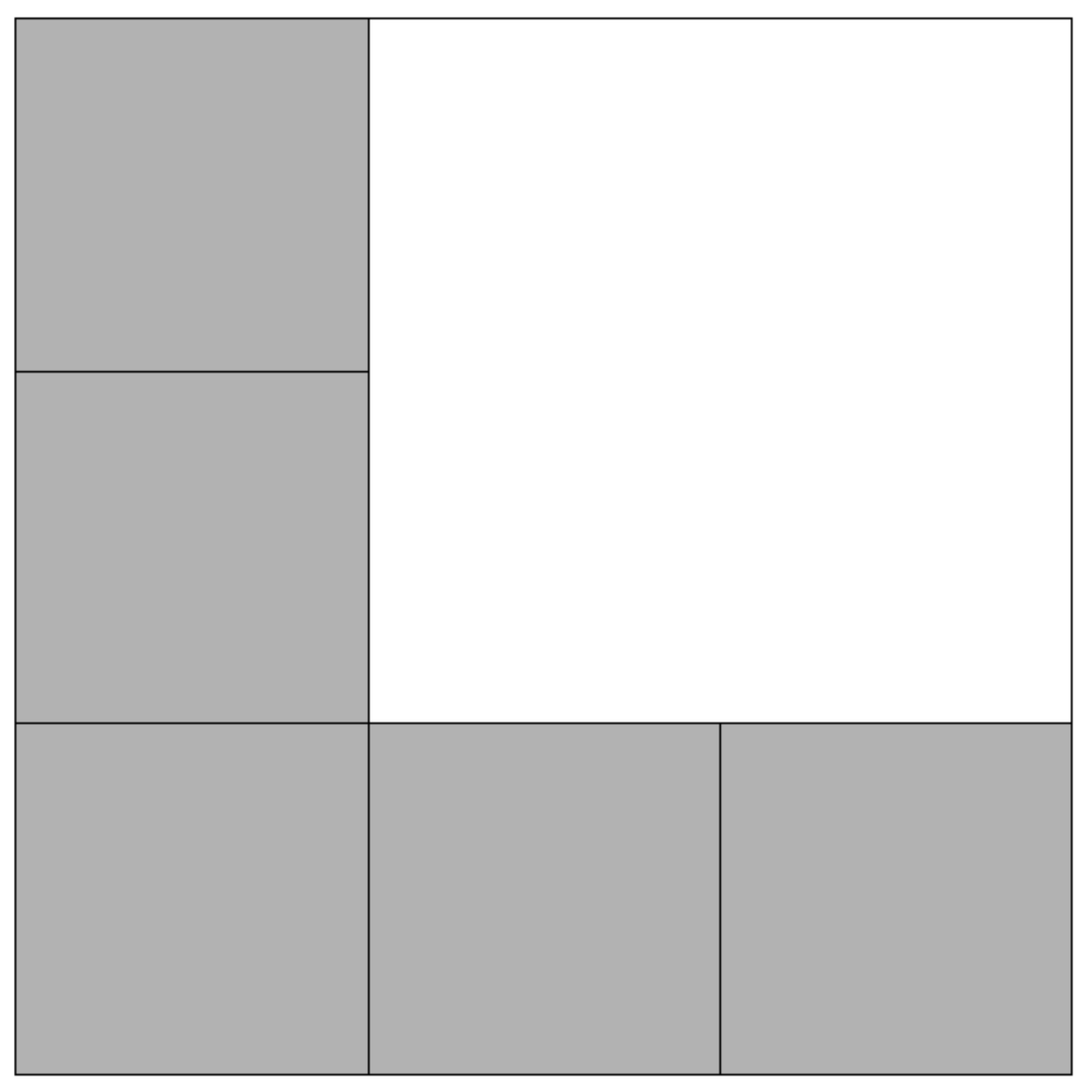}
}
\qquad
\subfigure[]{
 \includegraphics[width=4cm]{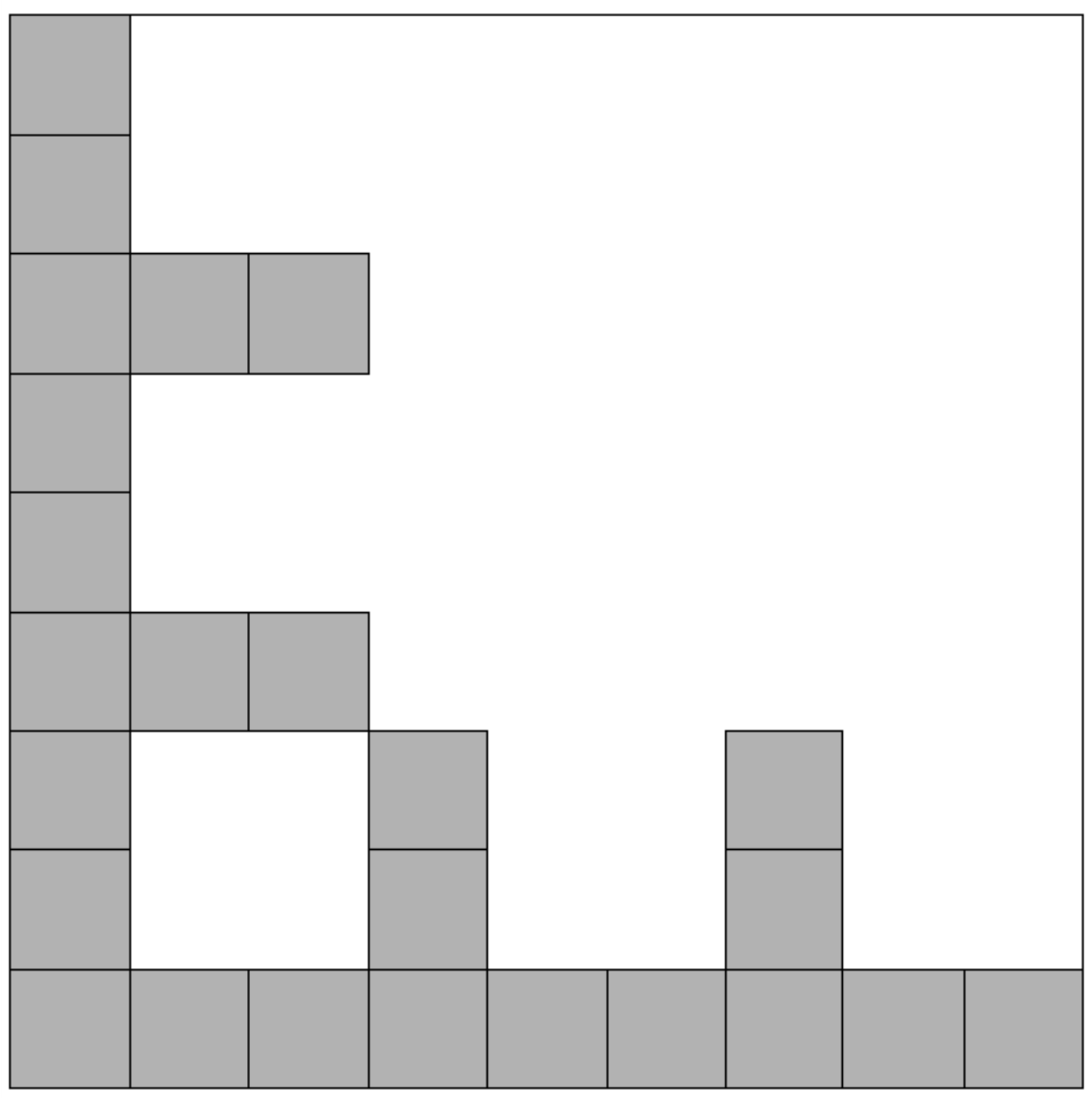}
}
\qquad
\subfigure[]{
 \includegraphics[width=4cm]{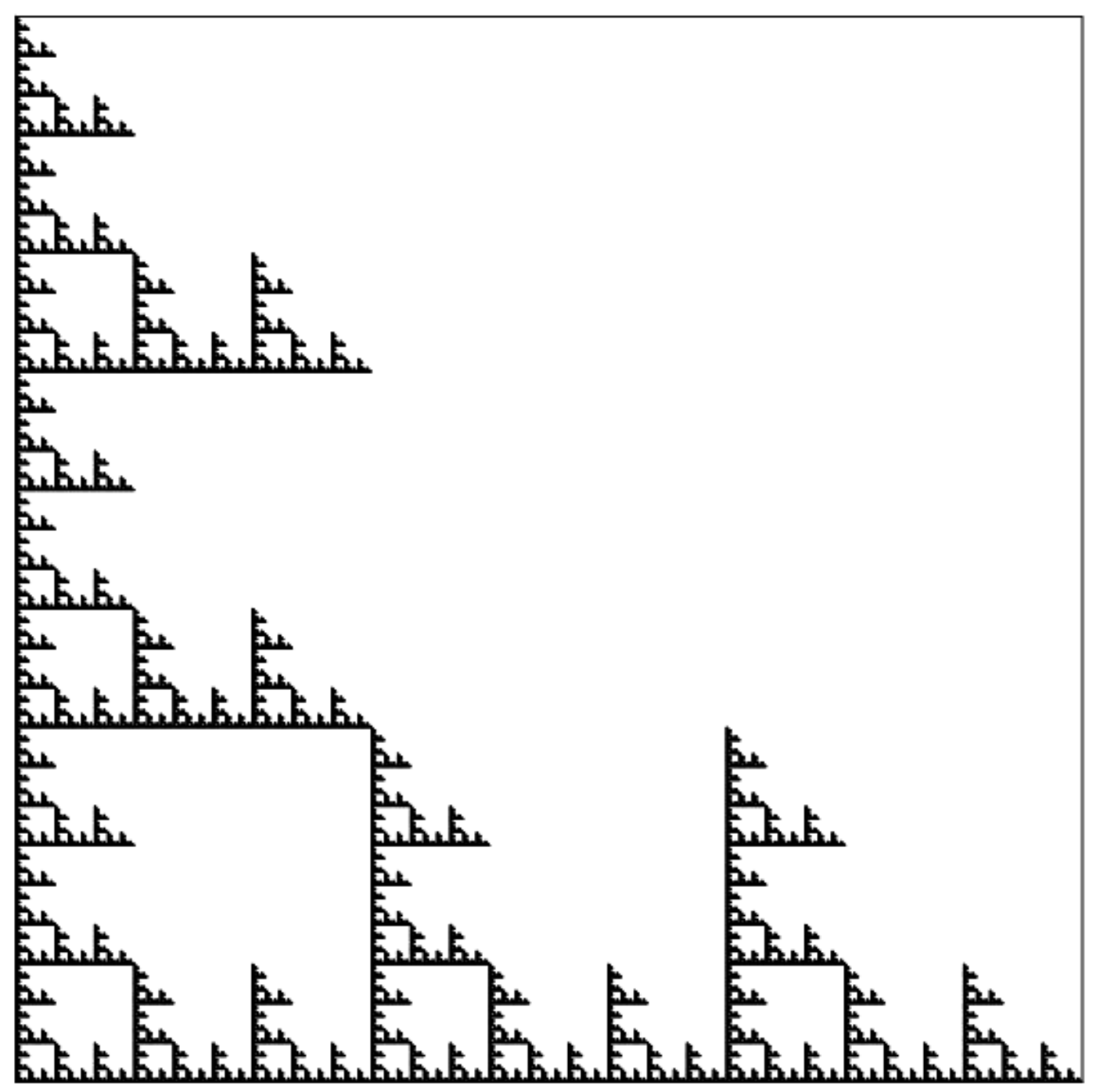}
}
\caption{Fractal square $F_7$}\label{fig-lem3.6}
\end{figure}

\begin{Lem} {\label{countable square lemma}}
$F_7$ is a connected set which equals the closure of a union of infinitely countable circles, and so does $F_8$.
\end{Lem}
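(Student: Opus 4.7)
The plan is to exhibit an explicit countably infinite family of rectangular Jordan curves (topological circles) lying in $F_7$, and then to verify that their union is dense in $F_7$. Connectedness of $F_7$ is immediate from Lemma~\ref{th2.1}: the five digits of ${\mathcal D}_7$ sit in an L-shape, so any two are edge-adjacent and hence $\mathcal{E}$-connected. For the base curve I will take
\[
C_0:=\partial S_1(B)=\partial\bigl([0,1/3]^2\bigr),
\]
the boundary of the bottom-left level-$1$ cell.

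The crucial step will be showing that all four sides of $C_0$ lie in $F_7$. For the left side $\{0\}\times[0,1/3]$, I note that the three digits $(0,0),(0,1),(0,2)$ all lie in ${\mathcal D}_7$, so every point $(0,y)$ has a valid IFS address built from the ternary digits of $y$ and therefore belongs to $F_7$; this gives $\{0\}\times[0,1]\subset F_7$. The analogous address argument using $(0,0),(1,0),(2,0)$ yields $[0,1]\times\{0\}\subset F_7$. For the remaining two sides I will use the identities $\{1/3\}\times[0,1/3]=S_4(\{0\}\times[0,1])$ and $[0,1/3]\times\{1/3\}=S_2([0,1]\times\{0\})$; since $S_i(F_7)\subset F_7$, both sides lie in $F_7$. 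Hence $C_0\subset F_7$, and iterating gives $C_{\mathbf{u}}:=S_{\mathbf{u}}(C_0)\subset F_7$ for every finite word $\mathbf{u}\in\bigsqcup_{k\ge 0}\Sigma^k$. Each $C_{\mathbf{u}}$ is a similarity image of a square boundary, hence a topological circle; different $\mathbf{u}$ produce circles of different size or position, so the family $\mathcal{C}=\{C_{\mathbf{u}}\}$ is countably infinite.

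For the equality $F_7=\overline{\bigcup\mathcal{C}}$, the inclusion $\supset$ is immediate from $C_{\mathbf{u}}\subset F_7$ and closedness of $F_7$. For the reverse, any $p\in F_7$ carries a symbolic address $\mathbf{u}=u_1u_2\cdots$ with $p\in S_{u_1\cdots u_k}(B)$ for all $k$; since $C_{u_1\cdots u_k}\subset S_{u_1\cdots u_k}(B)$ and $\operatorname{diam}(S_{u_1\cdots u_k}(B))=\sqrt{2}/3^k\to 0$, we get $d(p,C_{u_1\cdots u_k})\to 0$. Finally, for $F_8$: Theorem~\ref{th3.3} has already established $F_7\simeq F_8$, and both connectedness and the property of being the closure of a countable union of topological circles are homeomorphism invariants, so the conclusion transfers to $F_8$. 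I expect the main obstacle to be the verification for the top and right sides of $C_0$; the address-based argument reduces them cleanly to the already-handled boundary edges of $B$ via the IFS maps $S_2$ and $S_4$.
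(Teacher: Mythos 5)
Your proposal is correct and follows essentially the same route as the paper: the circles $C_{\mathbf u}=S_{\mathbf u}\bigl(\partial([0,1/3]^2)\bigr)$ are exactly the paper's sets $\frac{C}{3^k}+\frac{{\mathcal D}_{k-1}}{3^{k-1}}$, and both directions of the closure identity are argued the same way (your density argument via shrinking cylinders is an equivalent variant of the paper's truncated-address argument). The only cosmetic differences are your explicit address-based check that $C_0\subset F_7$ and your handling of $F_8$ by transporting the property through the (already established, non-circular) bi-Lipschitz map of Theorem~\ref{th3.3}, where the paper simply repeats the argument.
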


\begin{proof}
The connectedness can be obtained easily by Lemma \ref{th2.1}. Let $C=[0,1]\times\{0,1\}\cup \{0,1\}\times[0,1]$, then $C$ is a circle, and  $\frac{1}{3}C\subset F_7, \frac{1}{3}(\frac{1}{3}C+{\mathcal{D}})\subset F_7$ (see Figure \ref{fig-lem3.6}). By induction, we can get for any $k\geq 1$,  $$\frac{C}{3^k}+\frac{{\mathcal{D}}_{k-1}}{3^{k-1}}=\frac{C}{3^k}+\frac{{\mathcal{D}}}{3^{k-1}}+\cdots+\frac{{\mathcal{D}}}{3}\subset F_7.$$
Hence
$$\overline{\bigcup_{k=1}^{\infty}({\frac{C}{3^k}+\frac{{\mathcal{D}}_{k-1}}{{3^{k-1}}}})}\subset F_7.$$

On the other hand, for any $x\in F_7$, there exists a sequence $\{d_{j_i}\}_i$ with $d_{j_i}\in {\mathcal{D}}$ such that $x=\sum_{i=1}^{\infty}3^{-i}d_{j_i}.$ By (\ref{digitset}),  for all $k\geq 1$, we have
$\sum_{i=1}^k3^{-i}d_{j_i}\in  3^{-k}{\mathcal{D}}_k\subset\bigcup_{k=1}^{\infty}(\frac{C}{3^k}+\frac{{\mathcal{D}}_{k-1}}{{3^{k-1}}}),$
then $x\in \overline{\bigcup_{k=1}^{\infty}(\frac{C}{3^k}+\frac{{\mathcal{D}}_{k-1}}{{3^{k-1}}})}.$ Hence
$$F_7\subset \overline{\bigcup_{k=1}^{\infty}(\frac{C}{3^k}+\frac{{\mathcal{D}}_{k-1}}{{3^{k-1}}})}.$$
We omit the proof for $F_8$ as it is the same as above.
\end{proof}

\bigskip

A nonempty compact set $T \subset {\mathbb{R}}^2$ is called a {\it tree-like set} if for any two distinct points $x,y\in T,$ there is a unique path (or curve)  in $T$ connecting them.

\begin{figure}[h]
 \centering
 \subfigure[]{
 \includegraphics[width=4cm]{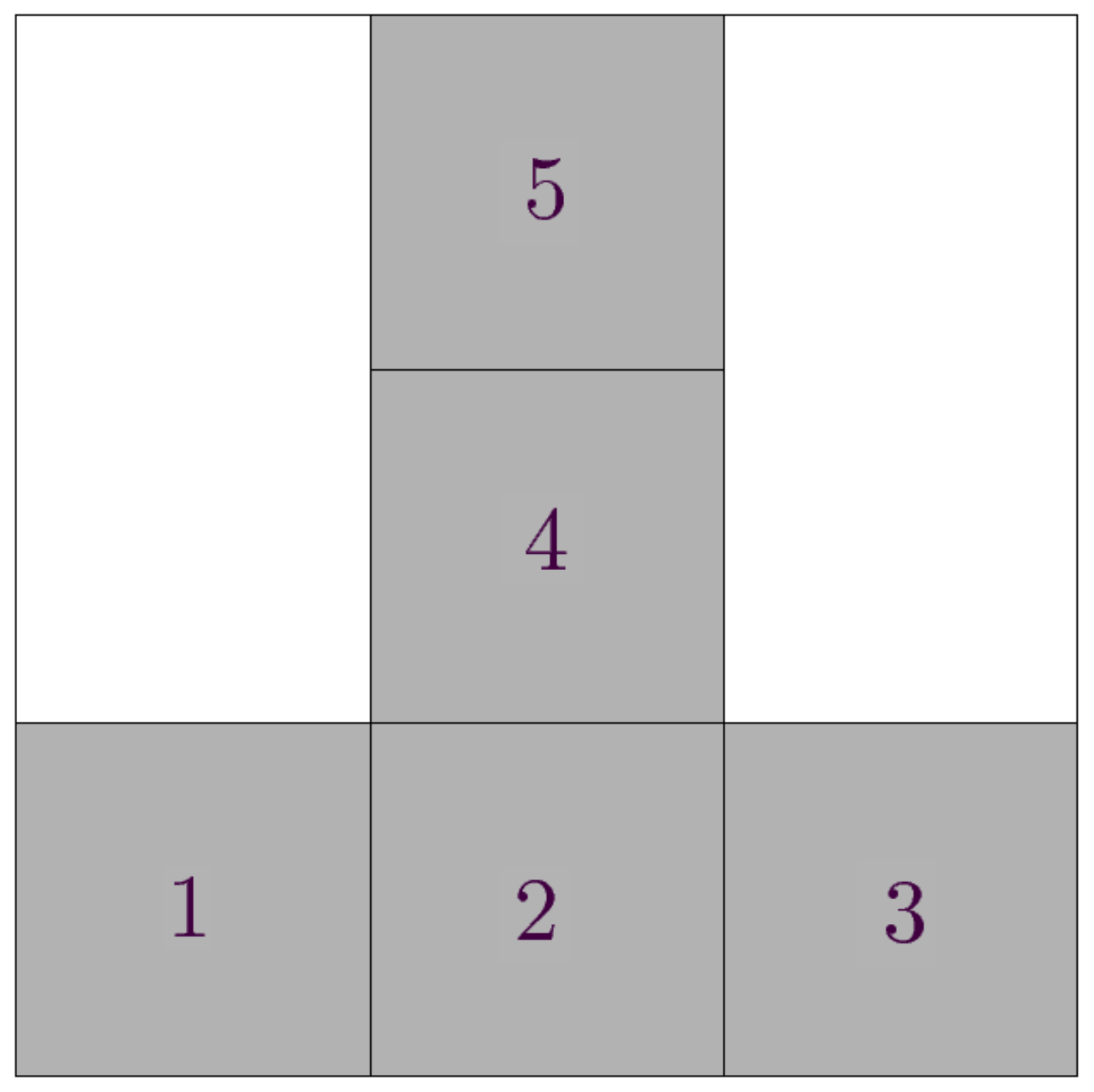}
} \qquad
\subfigure[]{
 \includegraphics[width=4cm]{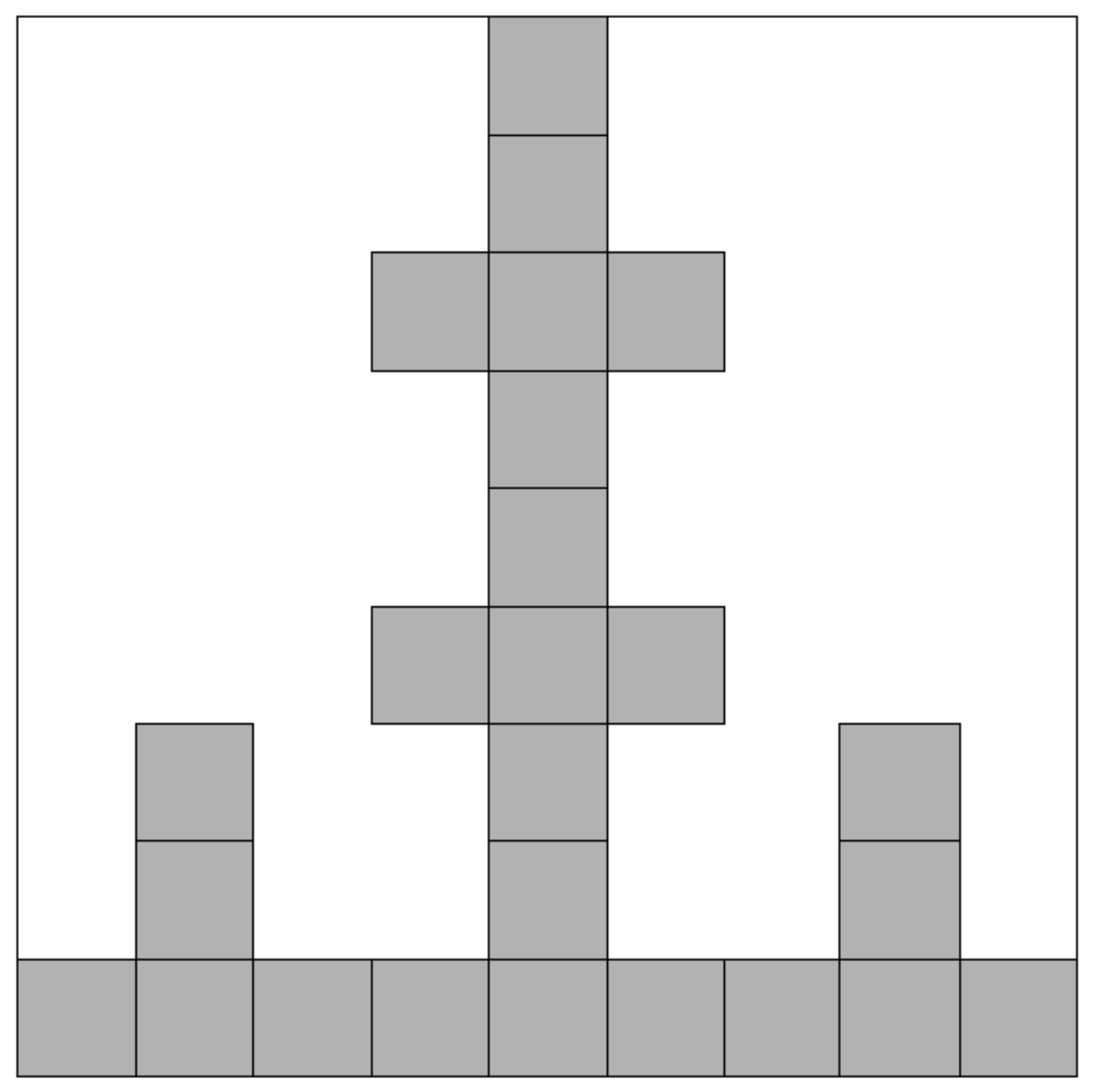}
} \qquad
\subfigure[]{
 \includegraphics[width=4cm]{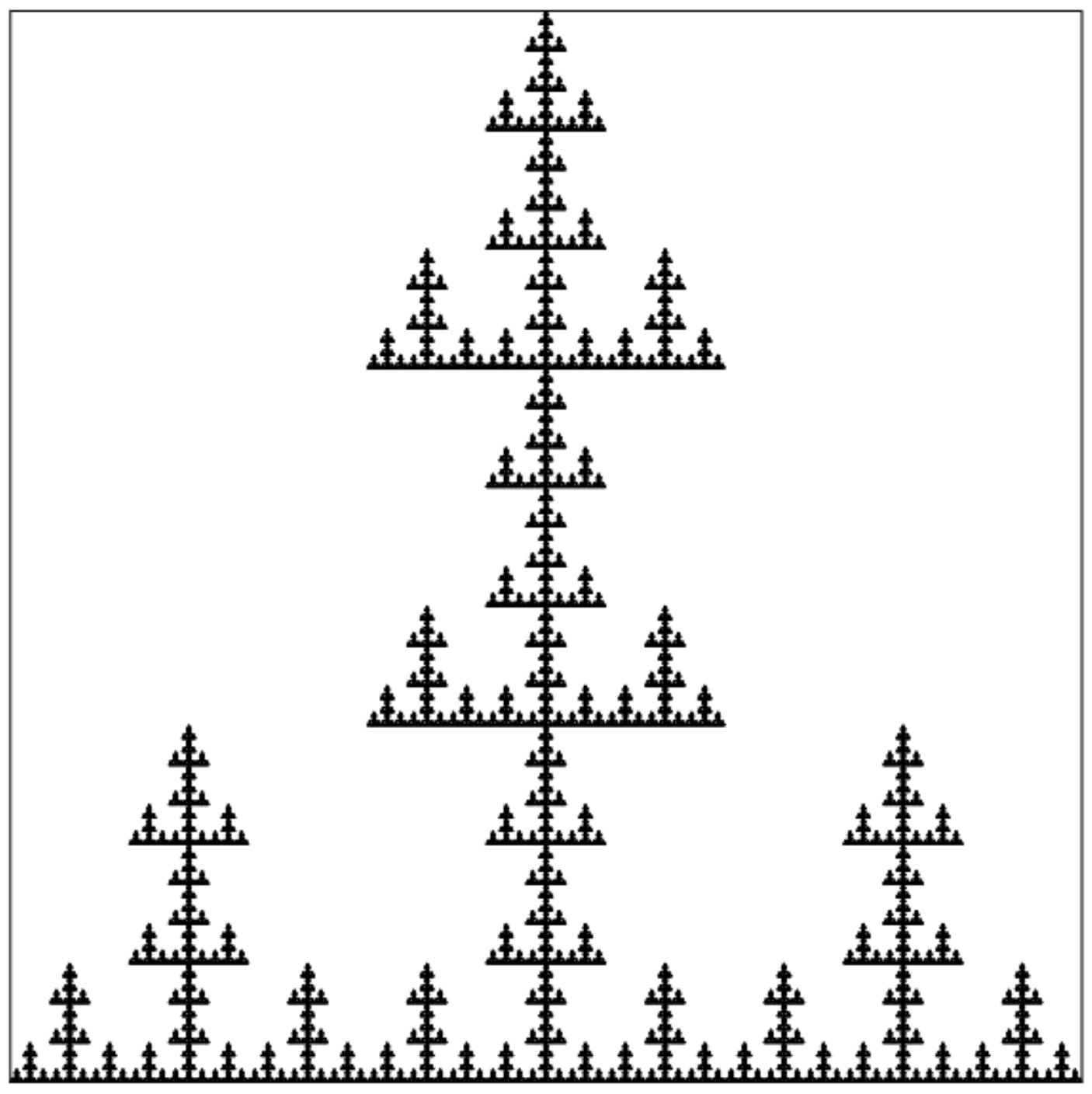}
}
\caption{Fractal square $F_6$}\label{fig.tria}
\end{figure}

\begin{theorem}\label{th3.7}
$F_6$ and $F_7$ are not homeomorphic, hence are not Lipschitz equivalent.
\end{theorem}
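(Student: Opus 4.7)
The plan is to distinguish $F_6$ and $F_7$ via a topological invariant: whether the set contains a Jordan curve (equivalently, whether it fails to be tree-like). By Lemma~\ref{countable square lemma}, $F_7$ contains the scaled boundary circle $\frac{1}{3}C$; any two distinct points on this circle are joined by two distinct arcs inside $F_7$, so $F_7$ is not tree-like. The nontrivial task is to show that $F_6$ is tree-like, equivalently that $F_6$ contains no Jordan curve.

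My first step is to compute how $F_6$ meets each side of $B$ using self-similarity. Since $(1,2)$ is the only digit in ${\mathcal D}_6$ touching the top edge, the trace $T:=F_6\cap\{y=1\}$ satisfies $T=S_{(1,2)}(T)$, and the unique compact invariant subset of the top edge under this contraction is the fixed point $\{(1/2,1)\}$. An identical argument gives $F_6\cap\{x=0\}=\{(0,0)\}$ and $F_6\cap\{x=1\}=\{(1,0)\}$, while all three bottom-row digits being in ${\mathcal D}_6$ yields $F_6\cap\{y=0\}=[0,1]\times\{0\}$. From these edge traces I would deduce that, for each pair of adjacent digits $d,d'\in{\mathcal D}_6$, the overlap $S_d(F_6)\cap S_{d'}(F_6)$ reduces to a single point, namely one of
$$p_1=(1/3,0),\quad p_2=(2/3,0),\quad p_3=(1/2,1/3),\quad p_4=(1/2,2/3),$$
while $S_d(F_6)\cap S_{d'}(F_6)=\emptyset$ for non-adjacent $d,d'$ (a cell such as $(0,0)$ and $(1,1)$ could only meet at the shared corner $(1/3,1/3)$, but this requires $(1,1)\in F_6$, which the top-edge computation rules out). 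Since the level-$1$ adjacency graph of ${\mathcal D}_6$ is a tree, each $p_k$ is a cut point of $F_6$ separating it into the two pieces corresponding to the two subtrees.

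Now suppose $\gamma\subset F_6$ is a Jordan curve. A Jordan curve has no cut point, since removing any point leaves a connected arc. Hence for each $p_k$, either $p_k\notin\gamma$ (and the connected set $\gamma$ lies in one component of $F_6\setminus\{p_k\}$) or $p_k\in\gamma$ (and the arc $\gamma\setminus\{p_k\}$ lies in one component of $F_6\setminus\{p_k\}$). Applying this to all four cut points forces $\gamma\subset S_d(F_6)$ for some $d\in{\mathcal D}_6$. By self-similarity, the identical reasoning inside $S_d(F_6)\cong F_6$ yields $\gamma\subset S_d\circ S_{d'}(F_6)$, and iterating gives $\gamma\subset S_{\mathbf u}(F_6)$ for words ${\mathbf u}$ of arbitrary length. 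Since $\mathrm{diam}\,S_{\mathbf u}(F_6)=3^{-|{\mathbf u}|}\mathrm{diam}\,F_6\to 0$, we obtain $\mathrm{diam}\,\gamma=0$, contradicting the fact that $\gamma$ is a Jordan curve. Hence $F_6$ contains no Jordan curve and is therefore tree-like.

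The main obstacle I anticipate is the geometric first step: pinning down the five overlaps $S_d(F_6)\cap S_{d'}(F_6)$ as the four isolated points $p_1,\dots,p_4$ via the self-similar edge analysis. Once these cut points are in hand, the cut-point/rescaling argument disposes of any hypothetical Jordan curve cleanly. Since tree-likeness is preserved under homeomorphisms and every bi-Lipschitz map is a homeomorphism, the conclusion $F_6\not\simeq F_7$ follows.
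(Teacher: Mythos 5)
Your proposal is correct, and it reaches the conclusion by a genuinely different route from the paper on the harder half of the argument. Both you and the paper use Lemma~\ref{countable square lemma} to see that $F_7$ carries a Jordan curve (the circle $\frac{1}{3}C$) and hence is not tree-like. For $F_6$, however, the paper works at every level $k$ simultaneously: it shows the level-$k$ adjacency graph $({\mathcal D}_k,{\mathcal E}_k)$ is a tree, takes two hypothetical paths $\pi,\pi'$ between $x$ and $y$, and derives a contradiction from a metric estimate ($\operatorname{diam}S_{\bf u}(B)=\sqrt2/3^k<\epsilon_0$) after trapping both paths in the unique chain of $k$-squares joining $x$ to $y$. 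You instead stay at level $1$: the edge traces $F_6\cap\{y=1\}=\{(1/2,1)\}$, $F_6\cap\{x=0\}=\{(0,0)\}$, $F_6\cap\{x=1\}=\{(1,0)\}$, $F_6\cap\{y=0\}=[0,1]\times\{0\}$ are correctly computed from the invariance of each trace under the unique map fixing that side, they do pin the five overlaps down to the four cut points $p_1,\dots,p_4$ and rule out the diagonal contacts, and the cut-point argument plus self-similar rescaling then shrinks any Jordan curve to a point. Your version is arguably cleaner and more robust (it never needs to control where a path goes, only where a connected set can sit relative to a cut point), at the price of two small facts you should make explicit: (a) the deduction ``$\gamma$ avoids or punctures every $p_k$'' $\Rightarrow$ ``$\gamma\subset S_d(F_6)$ for a single $d$'' uses that the chosen sides, one per edge of a tree, intersect in a single vertex (take a point of $\gamma$ other than the $p_k$'s); and (b) the equivalence between ``no Jordan curve'' and tree-likeness requires that $F_6$ is arcwise connected (Hata's theorem, as in the paper's Lemma~\ref{th2.1}) together with the classical fact that the union of two distinct arcs with common endpoints contains a simple closed curve. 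Neither is a gap of substance.
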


\begin{proof}
Lemma \ref{countable square lemma} implies $F_7$ is not a tree-like set, so it suffices to show that $F_6$ is a tree-like set. By Lemma \ref{th2.1}, $F_6$ is connected, hence is pathwise connected \cite{Ha85}. Thus for any two distinct points $x,y\in F_6$, there is a path $\pi(x,y)$ in $F_6$ connecting them. Next we show $F_6$ is a tree-like set by proving the uniqueness of  the path $\pi(x,y)$.

Following notation of (\ref{eq2.1}), let ${\bf u, v}\in \Sigma^k$.  It is known that if $S_{\bf u}(B)\cap S_{\bf v}(B)$ is singleton then $S_{{\bf u}i}(B)\cap S_{{\bf v}j}(B)=\emptyset$ for any $i,j\in \Sigma$; if $S_{\bf u}(B)\cap S_{\bf v}(B)$  is a line segment, say $L_k$, then there exists a unique pair $(i,j)\in{\Sigma}\times{\Sigma}$ such that $S_{{\bf u}i}(B)\cap S_{{\bf v}j}(B)\ ( :=L_{k+1})$ is also a line segment with length $$|L_{k+1}|=\frac{|L_k|}{3}=\frac{1}{3^{k+1}}$$
(see Figure \ref{fig.tria}).  Define
$${\mathcal{E}}_k=\{(d_{\bf u},d_{\bf v}): |S_{\bf u}(B)\cap S_{\bf v}(B)|=\frac{1}{3^k}, d_{\bf u},d_{\bf v}\in{\mathcal{D}}_k\}$$ to be the set of edges for ${\mathcal{D}}_k$.  Then $({\mathcal{D}}_k, {\mathcal{E}}_k)$ forms a tree by the argument above for any $k\ge 1$.

Assume $\pi'(x,y)$ is a path different from  $\pi(x,y)$. Then there exists a point $z_0\in \pi'(x,y)\backslash \{x,y\}$ such that
$${\epsilon}_0:=\inf\{|z-z_0|: z\in\pi(x,y)\backslash \{x,y\}\}> 0.$$
Since $x,y\in F_6 \subset\bigcup_{{\bf u}\in {\Sigma}^k}S_{\bf u}(B)$ and $x\ne y$, there is a large enough $k_0\geq \log_3{\frac{\sqrt{2}}{{\epsilon}_0}}+1$ such that, for any $k\geq k_0$, there exist ${\bf u, v}\in{\Sigma}^k$ such that $x\in S_{\bf u}(B), y\in S_{\bf v}(B)$ and $S_{\bf u}(B)\cap S_{\bf v}(B)=\emptyset$. By the tree structure of $({\mathcal{D}}_k,{\mathcal{E}}_k)$,  we can find a unique finite sequence ${\bf u}_1,\dots,{\bf u}_\ell$ satisfying ${\bf u}={\bf u}_1, {\bf v}={\bf u}_\ell$ and
$({\bf u}_i, {\bf u}_{i+1})\in{{\mathcal{E}}_k}$ for $i=1,\dots,\ell-1.$ Thus $\pi(x,y),\pi'(x,y)\subset \bigcup_{i=1}^\ell S_{{\bf u}_i}(B).$ Suppose $z_0\in S_{{\bf u}_{i_0}}(B)$,  then let $z_1\in\pi(x,y)\cap S_{{\bf u}_{i_0}}(B)$,  we get $$|z_0-z_1|\leq \text{diam}(S_{{\bf u}_{i_0}}(B))=\frac{\sqrt{2}}{3^k}<{\epsilon}_0.$$ That contradicts $|z_0-z_1|\geq {\epsilon}_0$.
\end{proof}

\bigskip

Let $E\subset {\mathbb R}^2$ be a nonempty connected set. We say a point $a\in E$ is a {\it $k$-branch point} if $E\setminus\{a\}$ consists of $k$ components. It is known that $k$-branch points are topological invariants.  A $1$-branch point of $E$ is often called a {\it top} of $E$. The following lemma is obvious.

\begin{Lem} {\label{branch point lemma0}}
Suppose that $x$ is a $k$-branch point in $E \subset {\mathbb R}^2$.  Then for any $U\subset {\mathbb R}^2$,  $E\setminus U$ has at least $k$ components provided that $U$ contains $x$ and the diameter $U$ is small enough.
\end{Lem}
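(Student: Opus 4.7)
The plan is to exploit the definition of a $k$-branch point directly: writing $E\setminus\{x\}=C_1\sqcup\cdots\sqcup C_k$ as its decomposition into connected components, we pick a witness point $y_i\in C_i$ for each $i$ and choose $U$ small enough that all of these witnesses survive in $E\setminus U$ and remain separated by $U$.

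Concretely, I would first fix the $y_i$ and set
\[
r\;=\;\min_{1\le i\le k}|x-y_i|,
\]
which is strictly positive since each $y_i\in C_i\subset E\setminus\{x\}$ and there are only finitely many of them. Then for any $U\subset\mathbb{R}^2$ with $x\in U$ and $\mathrm{diam}(U)<r$, every point $u\in U$ satisfies $|u-x|\le\mathrm{diam}(U)<r$, so $U\subseteq B(x,r)$. In particular no $y_i$ lies in $U$, and hence $y_1,\dots,y_k\in E\setminus U$.

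It remains to show these $k$ witnesses belong to pairwise distinct components of $E\setminus U$. If $y_i$ and $y_j$ were in a common component $D$ of $E\setminus U$, then $D$ would be a connected subset of $E$ that avoids $x$ (since $x\in U$), hence $D\subseteq E\setminus\{x\}$. But a connected subset of $E\setminus\{x\}$ must lie in a single component $C_\ell$, which forces $i=j=\ell$. This contradiction gives at least $k$ components of $E\setminus U$, as desired.

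The argument is essentially trivial (as the authors signal by calling it ``obvious''), and there is no genuine obstacle; the only point requiring any care is noticing that the condition $\mathrm{diam}(U)<r$ is enough to guarantee $U\subseteq B(x,r)$ (using $x\in U$), which is exactly what keeps each $y_i$ outside $U$.
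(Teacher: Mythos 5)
Your proof is correct and takes essentially the same approach as the paper's: both decompose $E\setminus\{x\}$ into its $k$ components, show each still meets $E\setminus U$ when $U$ is small, and use the observation that any connected subset of $E\setminus U$ avoids $x$ and hence lies inside a single component of $E\setminus\{x\}$. The only difference is the smallness threshold --- the paper takes $\mathrm{diam}(U)$ below half the minimum diameter of the components, while you take it below the minimum distance from $x$ to chosen witness points; your variant is in fact slightly more robust, since it still works when a component is a singleton (where the paper's $\delta$ would vanish).
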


\begin{proof}
Let $E_1,E_2,\dots,E_k$ be the  components of $E\setminus \{x\}$. Let $\delta$ be the minimum of the diameters $\text{diam}(E_j), 1\leq j\leq k.$ If $\text{diam}(U)<\delta/2$, then $(E\setminus U)\cap E_j$ is not empty and contributes at least one component to $E\setminus U$.
\end{proof}

Let $F\in {\mathcal F}_{n,m}$ and $\Sigma=\{1,\dots,m\}$.  For any point $x\in F$,  there exists an infinite word $i_1i_2\cdots$ such that
$$\{x\}=\bigcap_{k=1}^{\infty}S_{i_1\cdots i_k}(F)$$ where $i_j\in \Sigma$ and $S_{i_1\cdots i_k}=S_{i_1}\circ\cdots\circ S_{i_k}$.  We call $i_1i_2\cdots$ a {\it coding} of $x$, and $(F)_{i_1\cdots i_k} := S_{i_1\cdots i_k}(F)$  a {\it cylinder} of $F$.

\begin{Lem}{\label{branch point lemma2}}
Let $\{S_i\}_{i=1}^5$ be  the IFS  of $F_6$, which is depicted by Figure \ref{fig.tria}(a). Suppose $x$ belongs to $F_6$. Then

$(i)$ If the coding of $x$ is unique and contains finitely many symbols $2,4$, then $x$ is a $1$-branch point.

$(ii)$ Suppose the coding of $x$ is unique and contains infinitely many symbols $2,4$. If the coding is not eventually $2$, then $x$ is a $2$-branch point; otherwise $x$ is a $3$-branch point.

$(iii)$ If $x$ has more than one coding, then $x$ is either a $2$-branch or a $4$-branch point.
\end{Lem}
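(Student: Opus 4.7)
The plan is to analyze the branching at $x$ via the self-similarity $F_6=\bigcup_{i=1}^5 S_i(F_6)$ together with the tree-like property of $F_6$ (Theorem~\ref{th3.7}); in a tree-like set the number of components of $F_6\setminus\{x\}$ is determined by the local structure at $x$, which can be measured via Lemma~\ref{branch point lemma0}.

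I would first handle case~(iii). If $x$ has more than one coding, then $x\in S_i(F_6)\cap S_j(F_6)$ for some $i\neq j$, so $x$ sits at a junction between two $1$-cells. Reading off Figure~\ref{fig.tria}(a), there are four such junctions: two horizontal ones on the bottom bar (interior points of the bottom segment, yielding $2$-branch points) and two vertical ones along the stem (each coinciding with the T-vertex of the upper cell's sub-copy of $F_6$, so a horizontal bar and a vertical stem meet there, yielding $4$-branch). Bi-Lipschitz invariance of the branching degree under the cylinder maps (Lemma~\ref{lip on liner}) then propagates the classification to every multi-coded point.

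For cases~(i) and~(ii) the coding is unique, so $x$ lies in a single cylinder $S_{i_1\cdots i_k}(F_6)$ at each level. The key observation is which fixed point the tail approaches. The fixed point of $S_2$ is the principal T-vertex, a $3$-branch point internal to $S_2(F_6)$; when the coding is eventually $2$, uniqueness of the coding forces the image through the prefix cylinder to remain interior, and the $3$-branch structure is inherited. The fixed point of $S_4$ lies in the interior of the main stem, and when the coding is not eventually $2$ but still hits $\{2,4\}$ infinitely often, $x$ is pulled into the interior of a (possibly rescaled) segment of $F_6$, giving $2$-branch. Finally, case~(i) corresponds to the itinerary being eventually confined to the leaf cells $S_1,S_3,S_5$ and converging onto a tip of $F_6$, so $x$ is $1$-branch.

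The main obstacle is matching the symbolic behavior to the geometric picture in case~(ii); the $2$-branch sub-case requires showing that the limit point really does land on a segment interior and picks up no hidden extra branches. The cleanest route uses Lemma~\ref{branch point lemma0}: pick a small $U\ni x$, describe $F_6\setminus U$ via the edge-tree $({\mathcal D}_k,{\mathcal E}_k)$ from the proof of Theorem~\ref{th3.7}, and count components by recording how the limiting cylinder attaches to its neighbors along these edges.
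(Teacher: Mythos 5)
Your overall strategy---classifying $x$ by its coding and counting components of $F_6\setminus U$ for small cylinder-based sets $U$ via Lemma~\ref{branch point lemma0}---is the same as the paper's, and your level-one junction analysis for case (iii) (two side-by-side junctions on the bottom bar giving $2$-branch points, two stacked junctions on the stem giving $4$-branch points) reaches the correct conclusions. But two steps would fail as written. First, ``bi-Lipschitz invariance of the branching degree under the cylinder maps'' does not by itself propagate the classification from level one to deeper cylinders: the branch degree of $x$ in $F_6$ equals its branch degree inside the sub-copy $S_{\bf u}(F_6)$ \emph{plus} the number of attachments of that sub-copy to the rest of $F_6$ that happen to occur at $x$ itself. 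Whether such extra attachments exist is exactly the question of whether $x$ admits a further coding, so it must be argued rather than invoked; the paper does this explicitly (``if $x$ is not a top of $F_6$, then $x$ must belong to another cylinder, which means $x$ has more than one coding''), and the same point is needed to finish your case (i).

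Second, and more seriously, case (ii)---the only part of the lemma requiring a genuinely new computation---rests on a mechanism that is false. A point whose coding contains infinitely many symbols from $\{2,4\}$ but is not eventually $2$ need not land ``in the interior of a (possibly rescaled) segment of $F_6$'': the fixed point of $S_4\circ S_1$, with coding $(41)^\infty$, lies on no line segment of $F_6$ (its coordinates are $(3/8,3/8)$, while every segment of $F_6$ sits at a triadic height or at an abscissa of the form $p/3^k+1/(2\cdot 3^k)$), yet it must still be shown to be a $2$-branch point. The paper's argument never mentions segments: when $i_k=4$ it removes $U=(F_6)_{i_1\cdots i_k}\setminus\{(F_6)_{i_1\cdots i_k5^{\infty}}\}$ and checks that $F_6\setminus U$ has exactly two components; when the coding is eventually $2$ it removes the cylinder $(F_6)_{i_1\cdots i_{k_0}}$ keeping its three attachment points and obtains three components. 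Your closing sentence gestures at the right repair (count components of $F_6\setminus U$ through the cylinder tree), but until that count is actually carried out for the symbols $2$ and $4$---including codings such as $(21)^\infty$, which contain infinitely many $2$'s, no tail of $2$'s, and only finitely many $4$'s---part (ii) remains unproved.
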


\begin{proof}
$(i)$ Clearly if the coding of $x$ does not contain the symbols $2,4$, then $x$ is a $1$-branch point, namely, $x$ is a top of $F_6$ (see Figure \ref{fig.tria}(c)). Indeed, we can show by induction that if we delete the cylinder $(F_6)_{i_1\cdots i_k}$ from $F_6$, then the resulting set is still connected. Hence $x$ is a $1$-branch point by Lemma \ref{branch point lemma0}.

Now suppose that $i_1i_2\cdots$ contains symbols $2,4$, say $i_k$ is the last symbol in $2,4$ and $i_j$ belongs to $\{1,3,5\}$ for all $j>k$. Then $x$ is a top of the cylinder $(F_6)_{i_1\cdots i_k}$. If $x$ is not a top of $F_6$, then $x$ must belong to another cylinder, which means $x$ has more than one coding.

$(ii)$ Suppose $i_1i_2\cdots$ contains infinitely many symbols $2,4$, and it is not eventually $2$. This means $4$ will appear infinitely many times. Suppose $i_k=4$. Let $U=(F_6)_{i_1\cdots i_k}\setminus\{(F_6)_{i_1\cdots i_k5^{\infty}}\}$. Since $(F_6)_{i_1\cdots i_{k-1}}\setminus U$ consists of two components and $U$ does not intersect other cylinders of $F_6$, we conclude that $F_6\setminus U$ has only two components. Therefore $x$ is a $2$-branch point.

Now suppose that $i_1i_2\cdots$ is eventually $2$. Suppose $i_k=2$ for all $k\geq k_0$. Delete $(F_6)_{i_1\cdots i_{k_0}}$ but keep the three intersecting points with other cylinders, the resulting set consists of three components. Hence $x$ is a $3$-branch point.

$(iii)$ Now suppose $x$ has more than one coding. If $x$ has no coding of eventually $2$, then $x$ must be the common top of two cylinders and it is a $2$-branch point. If $x$ has a coding of eventually $2$, say $i_1\cdots i_k2^{\infty}$. If we delete $x$, then $(F_6)_{i_1\cdots i_k}$ is partitioned into three pieces. The other part of $F_6$  either connects to the top of $(F_6)_{i_1\cdots i_k}$ or connects to $x$. Hence $x$ is a $4$-branch point.
\end{proof}

\begin{figure}[h]
 \centering
 \subfigure[]{
 \includegraphics[width=4cm]{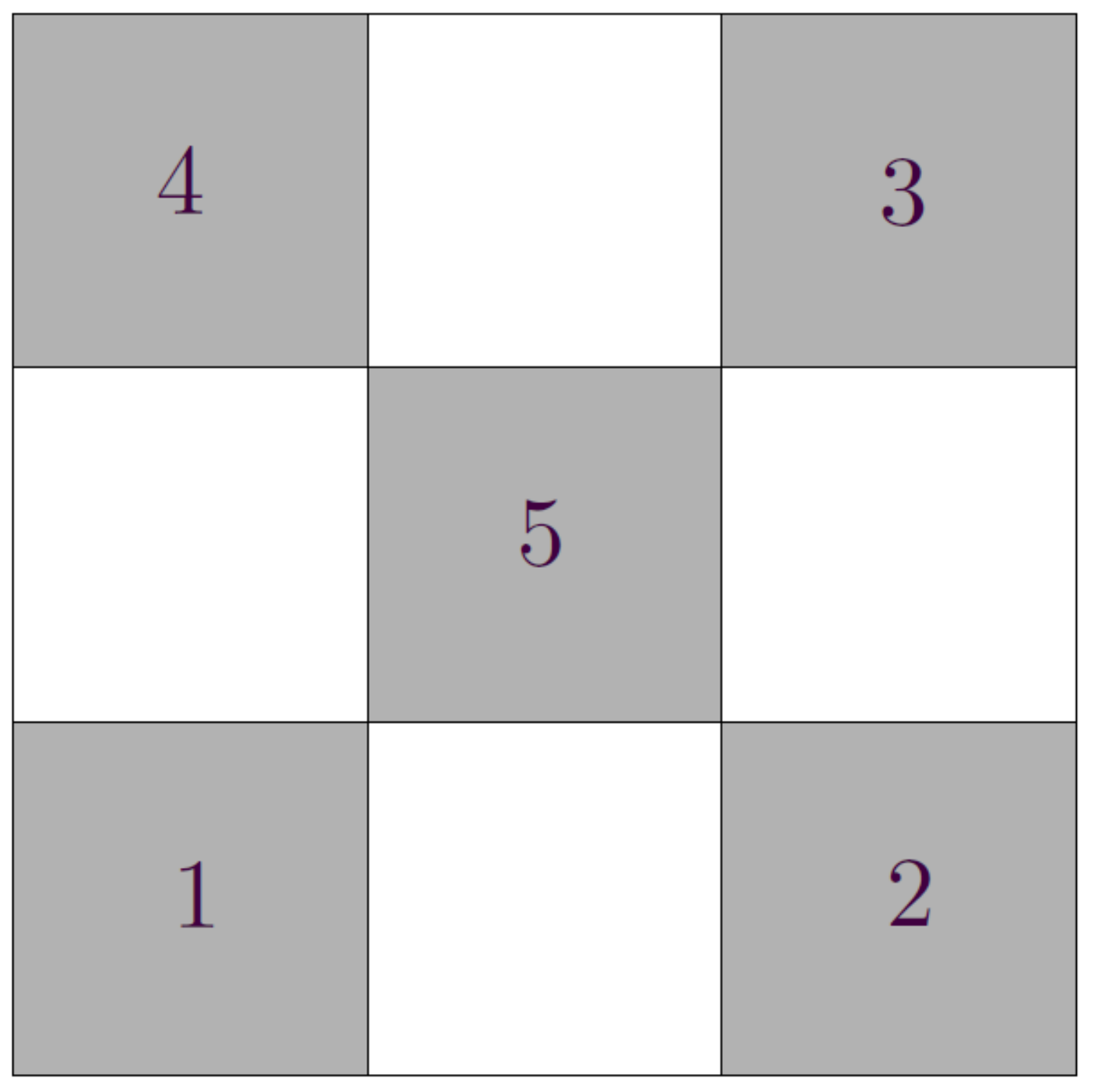}
}\qquad
 \subfigure[]{
 \includegraphics[width=4cm]{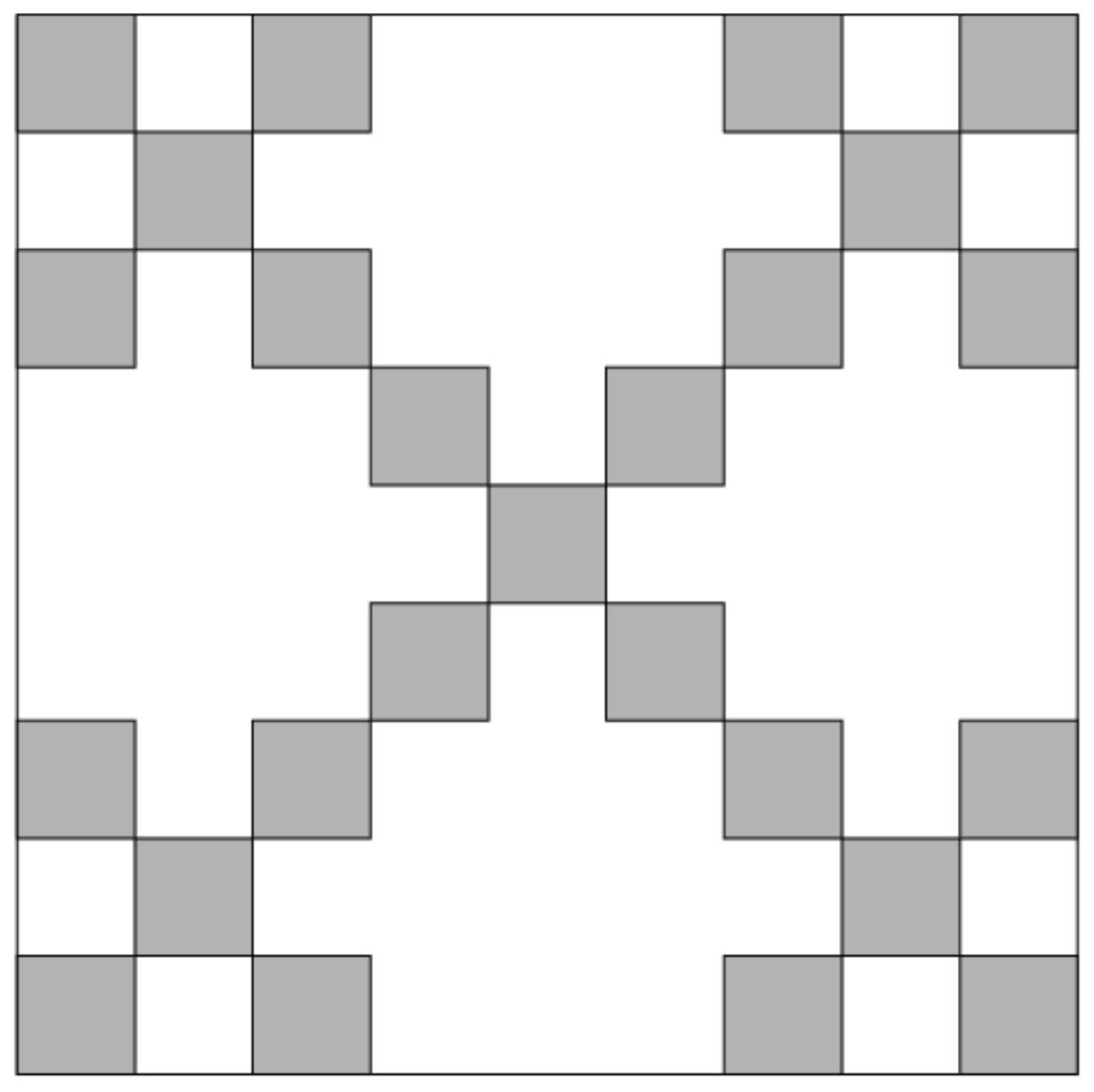}
}\qquad
 \subfigure[]{
 \includegraphics[width=4cm]{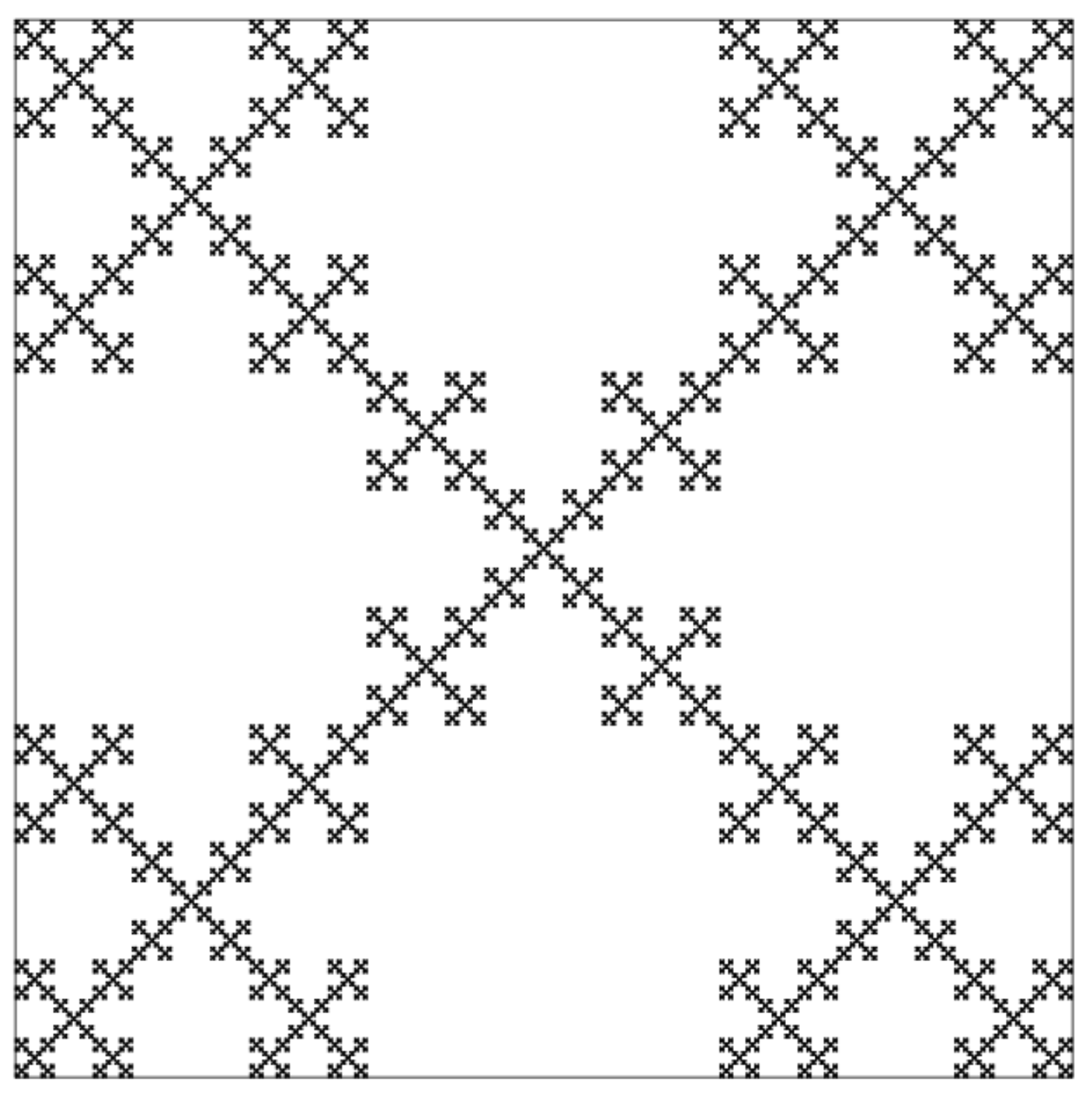}
}
\caption{Fractal square $F_9$}\label{fig.cross}
\end{figure}

\begin{Lem}{\label{branch point lemma1}}
Let $\{S_i\}_{i=1}^5$ be  the IFS  of $F_9$, which is depicted by Figure \ref{fig.cross}(a). Suppose $x$ belongs to $F_9$. Then

$(i)$ If the coding of $x$ is unique and  contains finitely many symbols $5$, then $x$ is a $1$-branch point.

$(ii)$ If the coding of $x$ is unique and contains infinitely many symbols $5$, then $x$ is a $4$-branch point.

$(iii)$ If $x$ has more than one coding, then $x$ is a $2$-branch point.
\end{Lem}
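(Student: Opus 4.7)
The plan is to adapt the strategy of Lemma~\ref{branch point lemma2} to the cross fractal $F_9$, exploiting its recursive structure: each cylinder decomposes into five sub-cylinders (four corner and one center), and two sub-cylinders at the same level intersect only at a single point, namely a corner of the center sub meeting the opposite corner of one of the four corner subs. Consequently, every multi-coded point is one such junction, and two cylinder-chains meet in at most a single point at every scale.

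For part (iii), the fact that $x$ has more than one coding forces $x$ to be a corner-to-center junction: there is a (possibly empty) word $\mathbf{w}$ and $j\in\{1,2,3,4\}$ with $x\in(F_9)_{\mathbf{w}j}\cap(F_9)_{\mathbf{w}5}$, and exactly two codings. I would show that $F_9\setminus\{x\}$ has exactly two components, namely the corner cylinder $(F_9)_{\mathbf{w}j}\setminus\{x\}$ (connected because $x$ is a ``top'' of this scaled cross, handled by the part~(i) argument at smaller scale) and the complementary set $F_9\setminus(F_9)_{\mathbf{w}j}$ (connected because $(F_9)_{\mathbf{w}5}\setminus\{x\}$ is itself connected by the same argument and joins up with the remaining corners and with the rest of $F_9$ through junctions other than $x$).

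For part (i), let $k^{*}$ be the position of the last $5$ in the coding (set $k^{*}=0$ if there is none). Because the coding is unique, the tail $i_{k^{*}+1}i_{k^{*}+2}\cdots\in\{1,2,3,4\}^{\mathbb N}$ is not eventually of the ``outer-junction'' form $c^{\infty}$ (where $c$ is the symbol opposite to the previous digit in each ancestor); this is precisely what forces uniqueness, since otherwise $x$ would be an ancestor's outer junction and hence multi-coded. Hence for arbitrarily large $K$ the cylinder $(F_9)_{i_1\cdots i_K}$ contains no outer junction of any of its ancestors. For such $K$, the set $F_9\setminus(F_9)_{i_1\cdots i_K}$ is connected, because removing $(F_9)_{i_1\cdots i_K}$ severs only the single link between this cylinder and its parent's center while preserving every ancestor-to-exterior junction. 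Since $\text{diam}((F_9)_{i_1\cdots i_K})\to0$, Lemma~\ref{branch point lemma0} forces $x$ to be a $1$-branch point.

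For part (ii), let $k_1<k_2<\cdots$ list the $5$-positions in the coding, and set $U_j=(F_9)_{i_1\cdots i_{k_j}}$, a center sub-cylinder of $(F_9)_{i_1\cdots i_{k_j-1}}$ that contains $x$. Removing $U_j$ from $F_9$ deletes the four corner junctions between $U_j$ and the corner sub-cylinders $(F_9)_{i_1\cdots i_{k_j-1}\ell}$, so these four corner sub-cylinders become mutually disconnected within the parent. Tracing the external connections of the parent (either through its own four corner junctions if $i_{k_j-1}=5$, through its unique outer junction if $i_{k_j-1}$ is a corner digit, or trivially if $k_j=1$) shows that each of the four corner sub-cylinders joins to a distinct portion of the outer $F_9$, so $F_9\setminus U_j$ has exactly four components labelled by direction $\ell\in\{1,2,3,4\}$. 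These labels propagate consistently under nesting $U_{j+1}\subset U_j$; taking the increasing union $\bigcup_j(F_9\setminus U_j)=F_9\setminus\{x\}$ preserves the four-component structure, making $x$ a $4$-branch point. The main obstacle is the careful combinatorial check---in part (i) that the chosen cylinder avoids all ancestor outer-junction patterns simultaneously, and in part (ii) that the four directional components do not merge globally---both of which ultimately rely on the tree-of-crosses structure of $F_9$.
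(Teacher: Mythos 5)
Your parts (ii) and (iii) track the paper's proof essentially step for step: in (iii) the paper simply observes that a multi-coded point is the common top of a corner cylinder and the adjacent centre cylinder, and in (ii) it removes the centre cylinder $U=(F_9)_{i_1\cdots i_k}$ with $i_k=5$, notes that $(F_9)_{i_1\cdots i_{k-1}}\setminus U$ has four components while $U$ touches nothing outside its parent, and lets $k$ run through the positions of the $5$'s; your extra checks (which corner cylinder carries the parent's external junction, consistency of the four labels under nesting) are refinements of the same argument. The genuine problem is part (i), and it sits exactly at the step you flag as ``the main obstacle'': the claim that uniqueness of the coding yields arbitrarily large $K$ for which $(F_9)_{i_1\cdots i_K}$ contains no outer junction of any ancestor. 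Uniqueness only says that for each fixed $j$ the infinite tail $i_{j+1}i_{j+2}\cdots$ is not the constant word $c_j^{\infty}$ ($c_j$ being the digit opposite to $i_j$); it does not prevent the finite suffix $i_{j+1}\cdots i_K$ from equalling $c_j^{\,K-j}$ for some $j<K$, for \emph{every} $K$. Concretely, let $a,b$ label two opposite corner cells and consider the coding $(ab)^{\infty}$, the unique coding of the point $(1/4,1/4)$: it contains no symbol $5$, yet for every $K\ge 2$ the cylinder $(F_9)_{i_1\cdots i_K}$ contains the outer junction of its parent (since $i_K$ is always opposite to $i_{K-1}$), and $F_9\setminus (F_9)_{i_1\cdots i_K}$ has two components for every $K\ge 2$. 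Indeed $(1/4,1/4)$ is an interior point of the diagonal arc of the dendrite $F_9$ and is genuinely a $2$-branch point, so the combinatorial check you defer cannot be carried out: statement (i) itself fails for uniquely coded points whose coding eventually uses only two opposite corner digits.

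You are in good company: the paper's own proof of (i) rests on the unproved assertion that deleting any cylinder $(F_9)_{i_1\cdots i_k}$ with all $i_j\in\{1,2,3,4\}$ leaves a connected set, and this fails for the same words (already $(F_9)_{ab}$ contains the junction of $(F_9)_a$ with the central cell, so removing it isolates $(F_9)_a\setminus(F_9)_{ab}$). What survives in both arguments is everything actually needed for Theorem \ref{th3.11}: every point of $F_9$ is a $1$-, $2$- or $4$-branch point, so $F_9$ has no $3$-branch points. If you want a correct version of (i), you must either add the hypothesis that the coding is not eventually a word in two opposite corner digits, or prove the weaker dichotomy that a uniquely coded point with finitely many $5$'s is a $1$- or $2$-branch point (your annulus decomposition does show the number of components of $F_9\setminus(F_9)_{i_1\cdots i_K}$ is at most two in this case, which is the sharp statement).
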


\begin{proof}
$(i)$ Clearly if the coding of $x$ does not contain the symbol $5$, then $x$ is a $1$-branch point, namely, $x$ is a top corner of $F_9$ (see Figure \ref{fig.cross}(c)). Indeed, we can show by induction that if we delete the cylinder $(F_9)_{i_1\cdots i_k}$ from $F_9$, then the resulting set is still connected. Hence $x$ is a $1$-branch point by Lemma \ref{branch point lemma0}.

Now suppose that $i_1i_2\cdots$  contains symbol $5$, say $i_k$ is the last $5$ and $i_j\in\{1,2,3,4\}$ for all $j>k$. Then $x$ is a top of the cylinder  $(F_9)_{i_1\cdots i_k}$. If $x$ is not a top of $F_9$, then $x$ must belong to another cylinder, which means $x$ has more than one coding.

$(ii)$ If $i_1i_2\cdots$ contains infinitely many $5$, suppose $i_k=5$. Let $U=(F_9)_{i_1\cdots i_k}$. Since $(F_9)_{i_1\cdots i_{k-1}}\setminus U$ consists of four components and $U$ does not intersect other cylinders of $F_9$, we conclude that $F_9\setminus U$ has only four components. Therefore $x$ is a $4$-branch point.

$(iii)$ If $x$ has more than one coding, then $x$ must be the common top of two cylinders, and it is a $2$-branch point.
\end{proof}

\begin{theorem}\label{th3.11}
$F_6$ and $F_9$ are not homeomorphic, hence are not Lipschitz equivalent.
\end{theorem}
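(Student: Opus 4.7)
The plan is to prove non-homeomorphism via a topological invariant, namely the number of local branches at a point. Specifically, if $h : F_6 \to F_9$ is a homeomorphism and $x \in F_6$ is a $k$-branch point, then $h(x)$ must be a $k$-branch point of $F_9$, because $h$ restricts to a homeomorphism $F_6 \setminus \{x\} \to F_9 \setminus \{h(x)\}$ and the number of connected components of a space is preserved under homeomorphism. Hence it suffices to exhibit a branch order that occurs in one of the two fractal squares but not in the other.

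Reading off Lemmas \ref{branch point lemma2} and \ref{branch point lemma1}, I observe that every point of $F_6$ has branch order in $\{1,2,3,4\}$, whereas every point of $F_9$ has branch order in $\{1,2,4\}$. In particular, $F_6$ contains $3$-branch points: any point whose coding is eventually the constant word $2$ qualifies, by Lemma \ref{branch point lemma2}(ii), and one may take the fixed point of $S_2$ as a concrete witness. On the other hand, $F_9$ contains no $3$-branch point at all, since Lemma \ref{branch point lemma1} covers every $x \in F_9$ by three exhaustive cases and each case produces a branch order in $\{1,2,4\}$.

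Combining these two observations yields the theorem at once: a $3$-branch point of $F_6$ admits no image in $F_9$ of the same branch order, so no homeomorphism $F_6 \to F_9$ can exist, and consequently no bi-Lipschitz bijection can exist either. The substantive work has effectively already been carried out in the two classification lemmas for branch points; what remains is only to invoke the elementary fact that branch order is a topological invariant, which is immediate from the behaviour of homeomorphisms on components of punctured sets. I do not anticipate a genuine obstacle beyond verifying that the $3$-branch witness in $F_6$ is correctly identified from the coding description, which is straightforward.
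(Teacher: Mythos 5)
Your proposal is correct and follows essentially the same route as the paper: the paper's proof of Theorem \ref{th3.11} likewise invokes Lemmas \ref{branch point lemma2} and \ref{branch point lemma1} to note that $F_6$ contains $3$-branch points while $F_9$ contains none, and concludes from the topological invariance of branch order. Your added details (the explicit witness via the fixed point of $S_2$ and the remark that a homeomorphism restricts to one on punctured sets) are consistent with, and slightly more explicit than, the paper's two-line argument.
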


\begin{proof}
By Lemmas \ref{branch point lemma2}, \ref{branch point lemma1}, we know that $F_6$ contains $3$-branch points, while $F_9$ contains no $3$-branch points. Therefore, they are not homeomorphic.
\end{proof}

\bigskip
\section{Remarks}

Because of  irregularity, it is difficult to  study the remaining $F_{13}, F_{14}$;  $F_{17}, F_{18}$; $F_{20}, F_{21}$ of ${\mathcal F}_{3,5}$. We conjecture that they are not Lipschitz equivalent at all, and  $\#({\mathcal F}_{3,5}/\simeq)=10$.

For the cases of ${\mathcal F}_{3,6}, {\mathcal F}_{3,7}$ and ${\mathcal F}_{3,8}$, we summarize their topological classifications as follows: up to congruence, ${\mathcal F}_{3,6}$ only contains $16$ fractal squares of which $6$  are disconnected and $10$  are connected; ${\mathcal F}_{3,7}$ only contains $8$  connected fractal squares; and ${\mathcal F}_{3,8}$ only  contains $3$ connected fractal squares (please see their figures in the next appendix section). Recently, Ruan and Wang \cite{RuWa} proved that $\#({\mathcal F}_{3,7}/\simeq)=8$ and $\#({\mathcal F}_{3,8}/\simeq)=3$  by making use of an old result called Whyburn's theorem. However, it is still hopeless to handle the other cases completely.

For the general ${\mathcal F}_{n,m}$, we can make a further discussion to get similar results as Section 3, but the process will become more complicated.
\end{section}

\bigskip

\begin{section}{Appendix: Figures of fractal squares}

\begin{figure}[h]
\includegraphics[width=14cm]{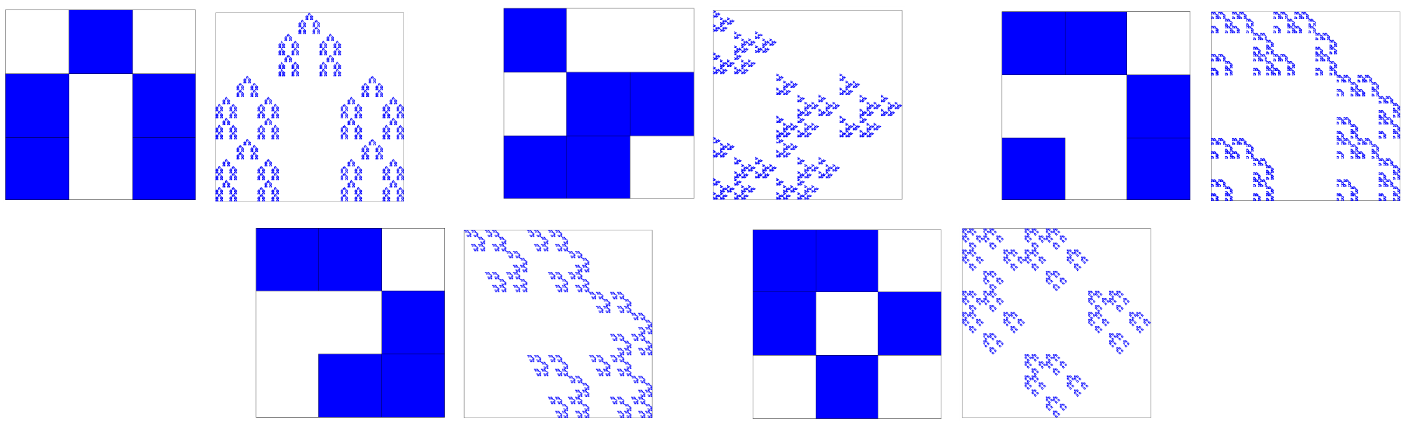}
\caption{Five totally disconnected fractal squares in ${\mathcal F}_{3,5}$}
\end{figure}

\begin{figure} [h]  \label{fig.2}
\includegraphics[width=14cm]{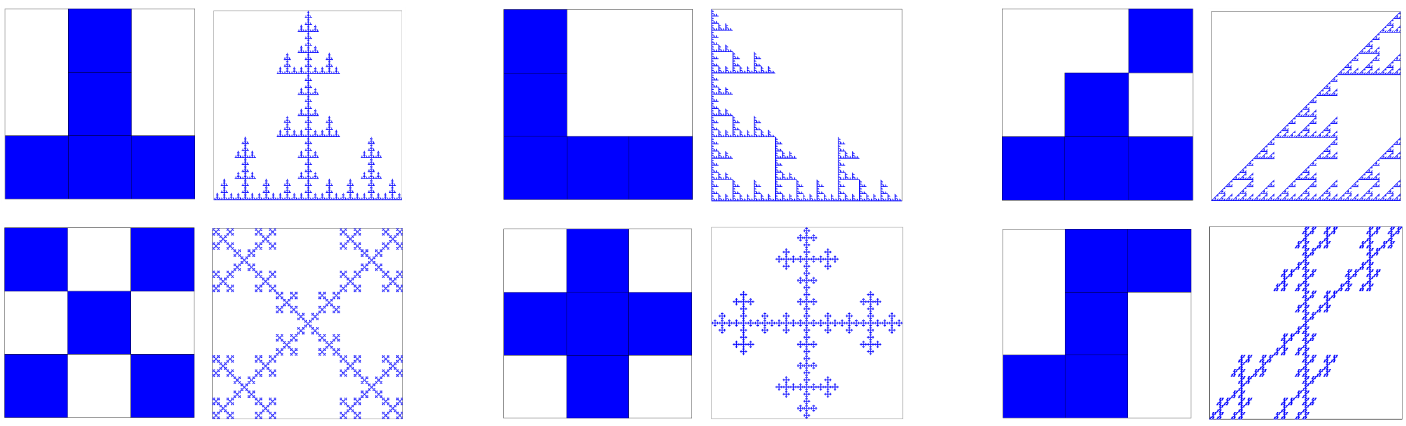}
  \caption{Six connected fractal squares  in ${\mathcal F}_{3,5}$}
\end{figure}

\begin{figure}[h]
\includegraphics[width=14cm]{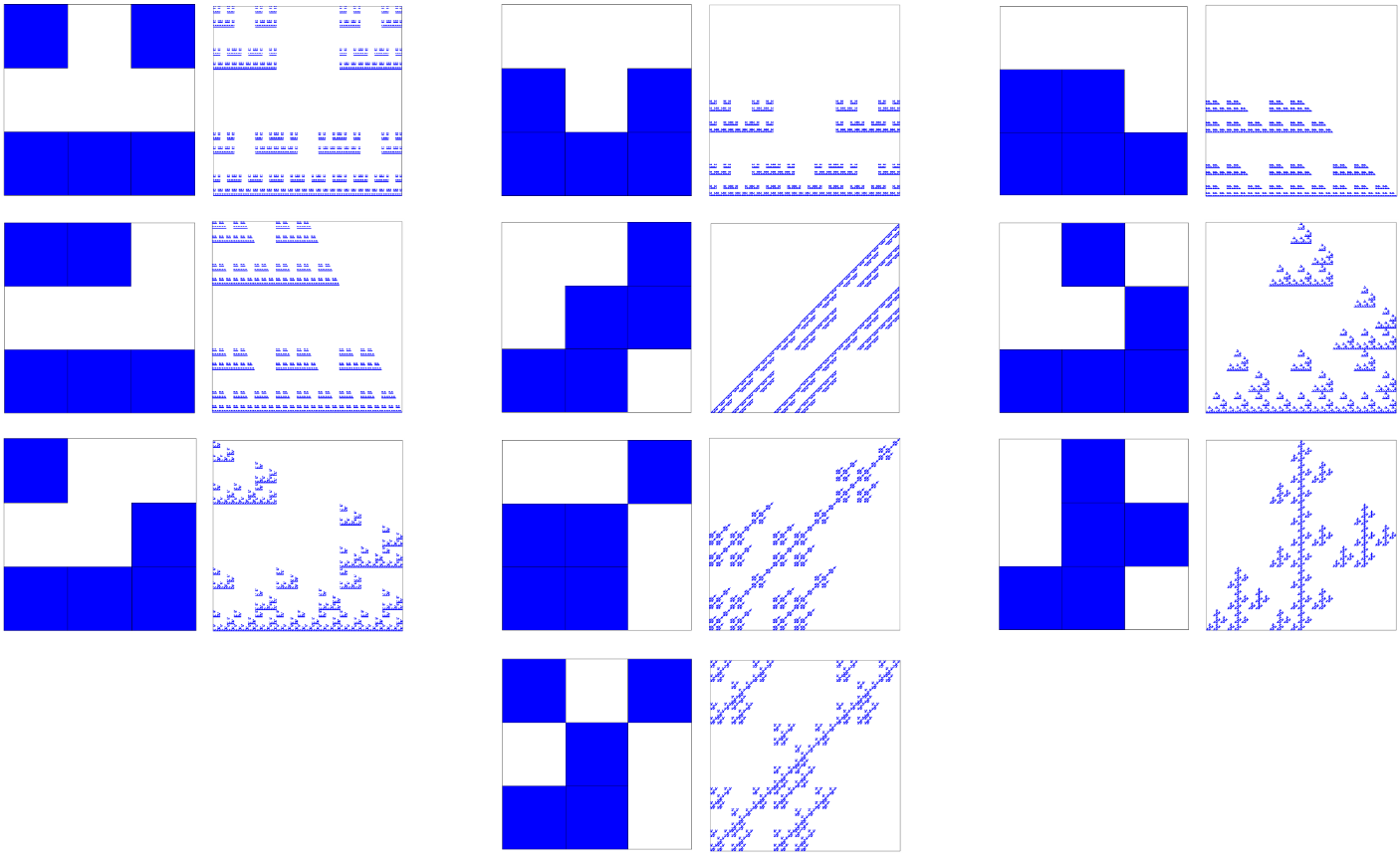}
 \caption{Ten fractal squares containing parallel line segments in ${\mathcal F}_{3,5}$}
\end{figure}

\begin{figure} [h]
\includegraphics[width=14cm]{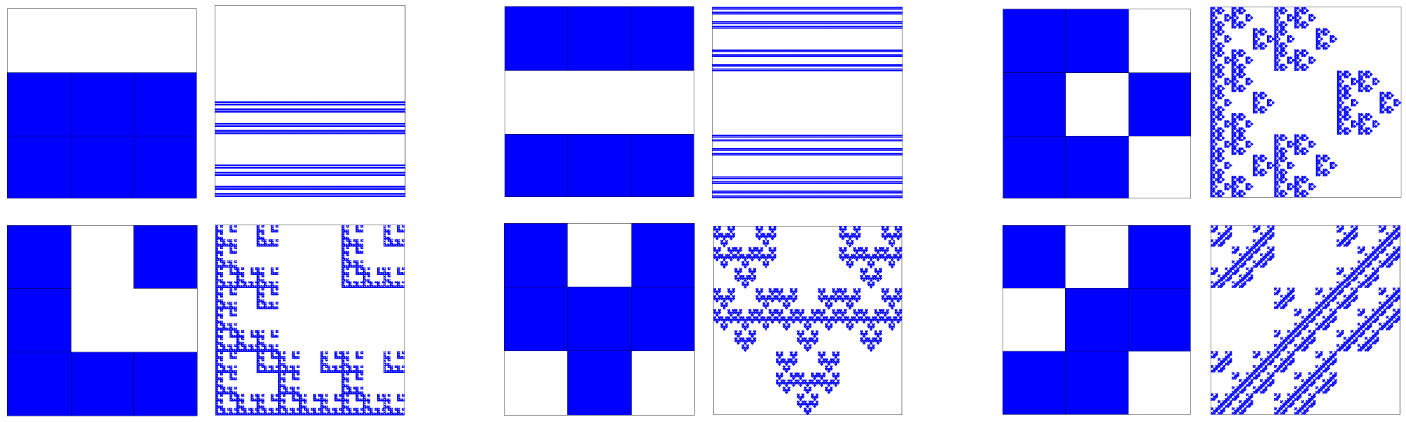}
\caption{Six disconnected fractal squares in ${\mathcal F}_{3,6}$}
\end{figure}

\begin{figure}[h]
\includegraphics[width=14cm]{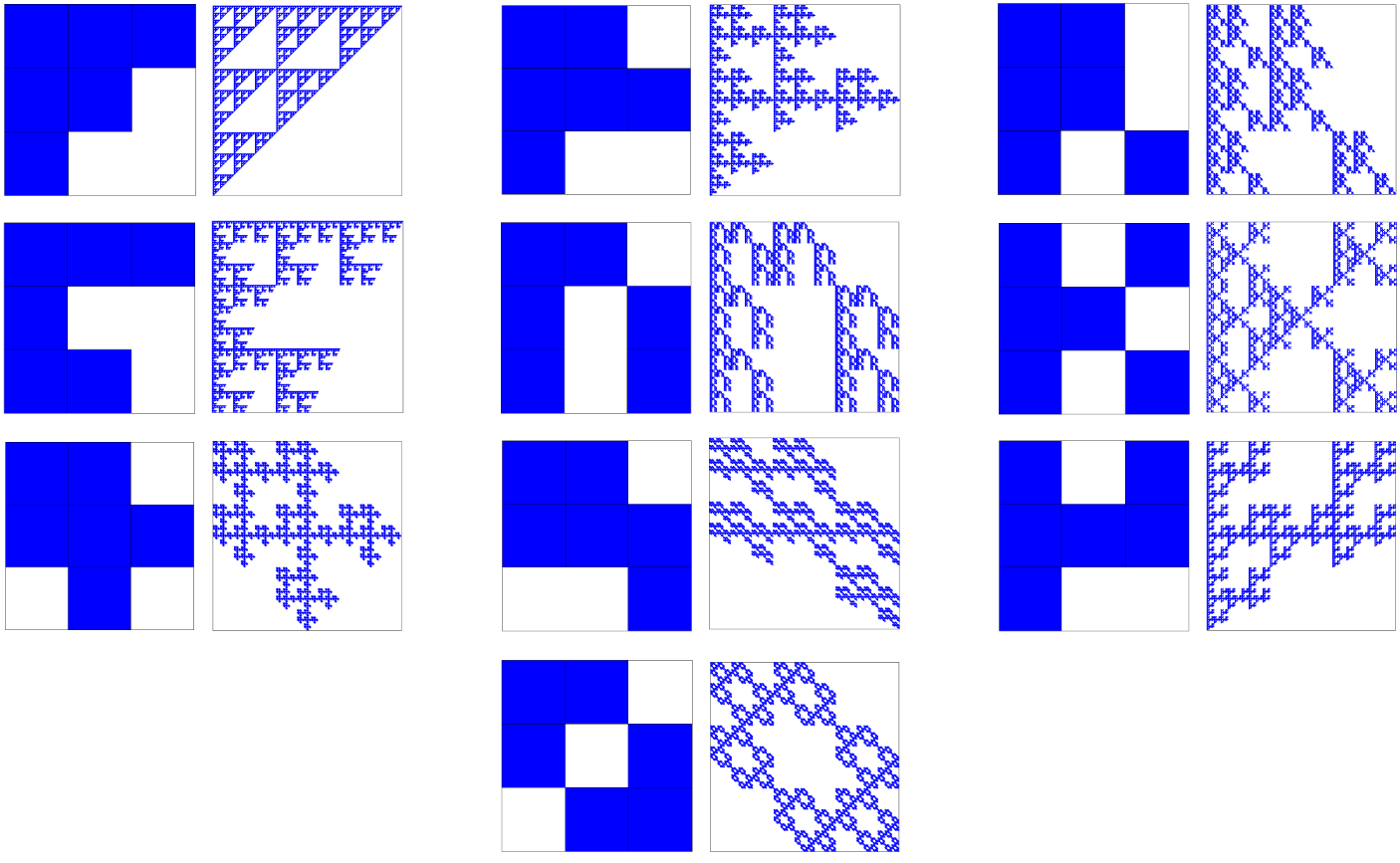}
\caption{Ten connected fractal squares  in ${\mathcal F}_{3,6}$}
\end{figure}

\begin{figure} [h]
\includegraphics[width=14cm]{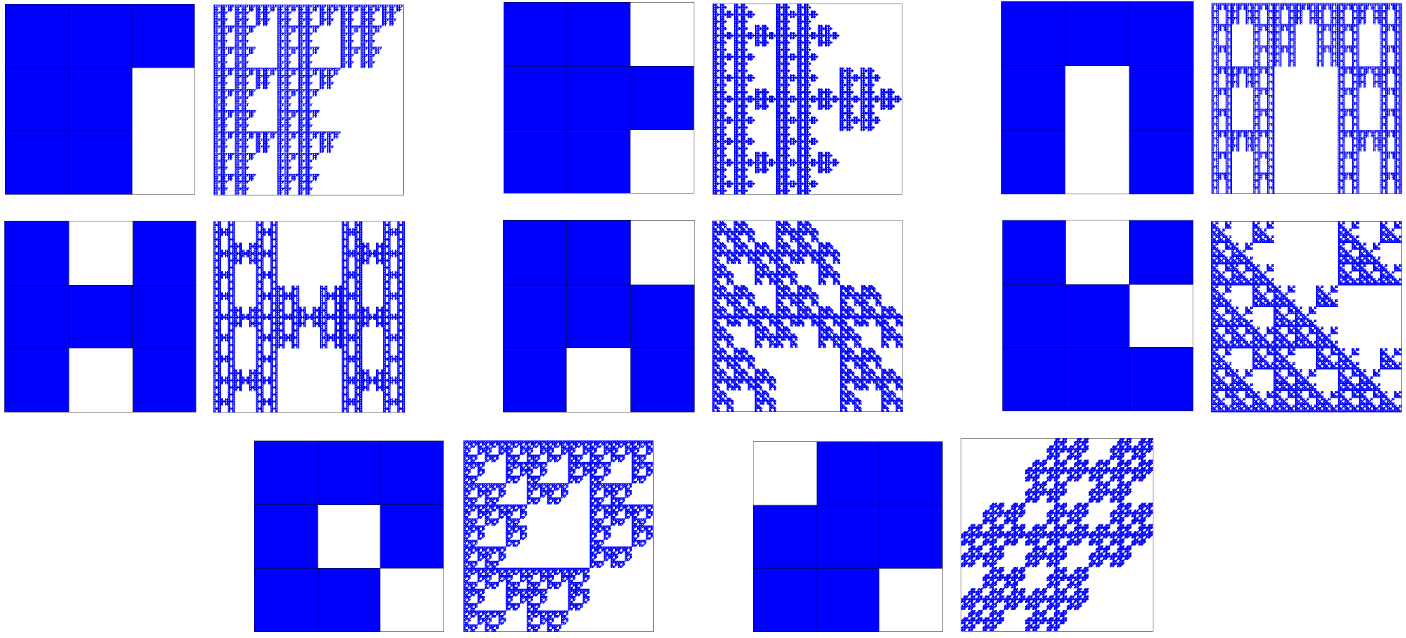}
\caption{Eight fractal squares in ${\mathcal F}_{3,7}$}
\end{figure}

\begin{figure} [h]
\includegraphics[width=14cm]{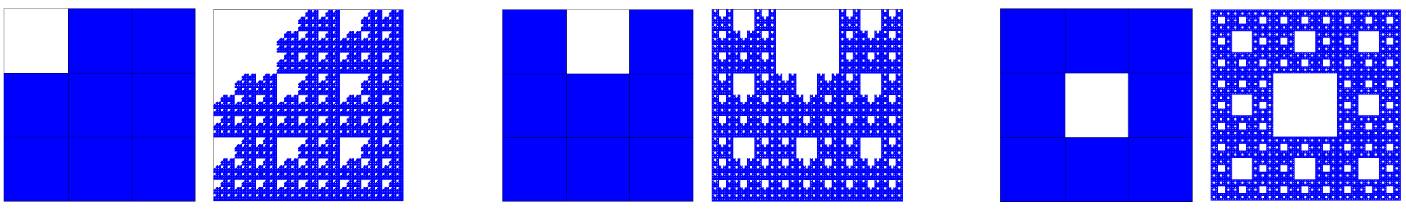}
\caption{Three fractal squares  in ${\mathcal F}_{3,8}$}
\end{figure}
\end{section}

\clearpage
\newpage


\begin{thebibliography}{99}

\bibliographystyle{ieee}
\addcontentsline{toc}{chapter}{Bibliography}

\bibitem{CoPi88} D. Cooper and T. Pignataro, {\it On the shape of Cantor sets}, J. Diff. Geom. { 28} (1988)  203-221.

\bibitem{DeLaLu14}  G.T. Deng, K.S. Lau and J.J. Luo,  {\it Lipschitz equivalence of self-similar sets and hyperbolic boundaries II}, Journal of Fractal Geometry, 2(2015), 53-79.

\bibitem{Fa90} K.J. Falconer, {\it Fractal geometry, Mathematical foundation and applications}, New York: John Wiley \& Sons, 1990.

\bibitem{FaMa92} K.J. Falconer and D.T. Marsh, {\it On the Lipschitz equivalence of Cantor sets}, Mathematika 39 (1992), pp. 223-33.

\bibitem{Ha85}  M. Hata, {\it On the structure of self-similar sets}, Japan J. Appl. Math. 2 (1985), 381-414.

\bibitem{KiLa00}  I. Kirat and  K. S Lau, {\it On the connectedness of self-affine tiles}, J. London Math. Soc. {62} (2000), 291-304.


\bibitem{LaLuRa13}  K.S. Lau, J.J. Luo and H. Rao,  {\it Topological structure of fractal squares},  Math. Proc. Camb. Phil. Soc.  {155} (2013),  73-86.

\bibitem{LuLa13}  J.J. Luo and K.S. Lau,  {\it Lipschitz equivalence of self-similar sets and hyperbolic boundaries}, Adv. Math. 235 (2013), 555-579.

\bibitem{RaRuXi06} H. Rao, H.J. Ruan and L.F. Xi,  {\it Lipschitz equivalence of self-similar sets}, CR Acad. Sci. Paris, Ser. I, 342 (2006), 191-196.

\bibitem{RaRuWa10} H. Rao, H.J. Ruan and Y. Wang,  {\it Lipschitz equivalence of Cantor sets and algebraic properties of contraction ratios}, Trans. Amer. Math. Soc. 364 (2012), 1109-1126

\bibitem{RuWa}  H.J. Ruan and Y. Wang, {\it Topological invariants and Lipschitz equivalence of fractal sets}, preprint.

\bibitem{RuWaXi12} H.J. Ruan, Y. Wang and L. Xi, {\it Lipschitz equivalence of self-similar sets with touching structures}, Nonlinearity 27 (2014), no.6, 1299-1321.

\bibitem{Ro10}  K. A. Roinestad, {\it Geometry of fractal squares},  Ph.D. Thesis, The Virginia Polytechnic Institute and State University (2010).


\bibitem{WeZhDe12} Z. Wen, Z. Zhu and G. Deng, {\it Lipschitz equivalence of a class of general Sierpinski carpets}, J. Math. Anal. Appl. 385 (2012), 16-23.

\bibitem{XiXi10} L.F. Xi and Y. Xiong, {\it Self-similar sets with initial cubic patterns}, CR Acad. Sci. Paris, Ser. I,  348 (2010),  15-20.

\bibitem{XiXi13} L.F. Xi and Y. Xiong, {\it Lipschitz equivalence class, ideal class and the Gauss class number problem}, arXiv:1304.0103.
\end{thebibliography}
\end{document}